\theoremstyle{plain}
\newtheorem{theorem}{Theorem}
\newtheorem{corollary}{Corollary}
\newtheorem{proposition}{Proposition}
\theoremstyle{definition}
\newtheorem{definition}{Definition}
\newtheorem{example}{Example}
\theoremstyle{remark}
\newtheorem{remark}{Remark}
\DeclareMathOperator*{\argmin}{arg\,min}
\DeclareMathOperator{\rank}{rank}
\author{M.V. Dolgopolik}
\title{Existence of augmented Lagrange multipliers: reduction to exact penalty functions and localization
principle}
\begin{document}

\maketitle

\begin{abstract}
In this article, we present new general results on existence of augmented Lagrange multipliers. We define a
penalty function associated with an augmented Lagrangian, and prove that, under a certain growth assumption on the
augmenting function, an augmented Lagrange multiplier exists if and only if this penalty function is exact. We also
develop a new general approach to the study of augmented Lagrange multipliers called the localization principle. The
localization principle allows one to study the local behaviour of the augmented Lagrangian near globally optimal
solutions of the initial optimization problem in order to prove the existence of augmented Lagrange multipliers.
\end{abstract}

\section{Introduction}

The augmented Lagrangian approach to optimization problems with equality constraints was introduced by Hestenes
\cite{Hestenes} and Powell \cite{Powell}. Due to its efficiency and various interesting features, this approach
became a popular tool of nonlinear optimization, and was thoroughly investigated by many researchers
(see~\cite{Rockafellar1,Rockafellar2,ShapiroSun2004,AndreaniBirginEtAl,RuckmannShapiro,Gasimov,BurachikGasimovEtAl,
BirginFloudas2010,CurtisJianRobinson,ChatzipanagiotisEtAl,BurachikIusemMelo2015,BirginMartinez_book,BirginMartinez2015}
and references therein). A modern general formulation of the augmented Lagrangian method was first proposed by
Rockafellar and Wets \cite{RockWets}, and further developed in \cite{HuangYang2003,ZhouYang2004,HuangYang2005}. Let us
also mention several extensions
\cite{GasimovRubinov,BurachikRubinov,ZhouYang2006,ZhangYang2008,ZhouYang2009,BurachikIusemMelo,ZhouYang2012,
WangYangYang2014,WangLiuQu} of this augmented Lagrangian method aiming at including some other augmented Lagrangian and
penalty methods into the unified framework proposed in \cite{RockWets}. Throughout this article, we use a direct
generalization of the augmented Lagrangian introduced in \cite{RockWets}.

One of the main notions in the theory of augmented Lagrangian functions is an augmented Lagrange multiplier
(see~\cite{ShapiroSun2004}). Apart from the fact that the existence of an augmented Lagrange multiplier guarantees the
absence of duality gap, as well as the existence of a globally optimal solution of the augmented dual problem, the
existence of an augmented Lagrange multiplier is important for the global convergence analysis of primal-dual methods
based on the use of the augmented Lagrangian \cite{BurachikIusemMelo2015}. Thus, the problem of existence of augmented
Lagrange multipliers (which is closely related to the problem of existence of global saddle points of the augmented
Lagrangian) is very important for the theory of augmented Lagrangian methods. 

Many known general necessary and/or sufficient conditions for the existence of augmented Lagrange multipliers are
based either on some abstract assumptions on the optimal value function \cite{ZhangYang2008,WangYangYang2014} or on some
assumptions on optimal or suboptimal solutions of a perturbed problem
\cite{ShapiroSun2004,RuckmannShapiro,KanSong,KanSong2} that are very hard to verify. However, in some
particular cases there exist more constructive \textit{sufficient} conditions for the existence of augmented Lagrange
multipliers that rely on the use of sufficient optimality conditions and some compactness assumptions. These conditions
were obtained for mathematical programming problems
\cite{LiuTangYang,WangZhouXu,LuoMastroeniWu,ZhouXiuWang,WangLiuQu}, second-order cone programming problems
\cite{ZhouChen2015}, cone constrained optimization problems \cite{ZhouZhouYang2014}, nonlinear semidefinite programming
\cite{WuLuoYang}, and generalized semi-infinite min-max problems \cite{WangZhouXu}.

The main goal of this article is to develop two general approaches to the problem of existence of augmented Lagrange
multipliers that allow one to obtain simple and easily verifiable \textit{necessary and sufficient} conditions for 
the existence of augmented Lagrange multipliers. The first approach is based on the use of the theory of
exact penalty functions. Namely, we introduce a penalty function associated with the augmented Lagrangian, and prove
that an augmented Lagrange multiplier exists if and only if this penalty function is exact, provided that 
the augmenting function satisfies a certain growth condition. This result allows one to use the well-developed theory of
exactness of penalty functions \cite{Dolgopolik} in order to prove the existence of augmented Lagrange multipliers. In
particular, we prove that an augmented Lagrange multiplier of the sharp Lagrangian exists if and only if the standard
$\ell_1$ penalty function is exact. Furthermore, we demonstrate that if there exists an augmented Lagrange multiplier
of the sharp Lagrangian, then \textit{any} multiplier is an augmented Lagrange multiplier for this Lagrangian.

The second approach to the existence problem for augmented Lagrange multipliers that we develop in this article is
called \textit{the localization principle}. The localization principle allows one to study the \textit{local} behaviour
of the augmented Lagrangian near globally optimal solutions of the primal problem in order to prove the existence of an
augmented Lagrange multiplier. In turn, the local analysis of the augmented Lagrangian can be easily performed with the
use of sufficient optimality conditions. Thus, the localization principle reduces the problem of existence of augmented
Lagrange multipliers to the study of optimality conditions. The localization principle in an abstract form was first
formulated in the context of the theory of exact penalty function in \cite{Dolgopolik} (see~\cite{Dolgopolik},
Theorems~3.7, 3.10, 3.17, 3.20 and 3.21). In this paper, we demonstrate that the localization principle can be easily
extended to the theory of augmented Lagrangian functions. It should be noted that the localization principle allows one
to understand a general principle behind many known results on existence of augmented Lagrange multipliers (or saddle
points) of augmented Lagrangian functions (see
\cite{LiuTangYang,LuoMastroeniWu,ZhouXiuWang,WangLiuQu,ZhouChen2015,ZhouZhouYang2014,WuLuoYang,WangZhouXu},
and Remark~\ref{Rmrk_GeneralPrinciple} below).

The paper is organised as follows. The definition of augmented Lagrange multiplier and its useful reformulations are
given in Section~\ref{Sect_AugmLagrMultipliers}. In Section~\ref{Sect_ReductionToExPenFunc}, we introduce a penalty
function associated with the augmented Lagrangian, and study a connection between exactness of this penalty function and
the existence of augmented Lagrange multipliers. Section~\ref{Sect_LocalizationPrinciple} is devoted to the localization
principle, while in Section~\ref{Sect_Applications}, we present applications of the localization principle to 
mathematical programming and nonlinear semidefinite programming problems.

\section{Augmented Lagrange multipliers}
\label{Sect_AugmLagrMultipliers}

Let $X$ be a topological space, $A \subset X$ be a nonempty set, and $f \colon X \to \mathbb{R} \cup \{ + \infty \}$ be
a given function. Hereinafter, we study the optimization problem
$$
  \min f(x) \quad \text{subject to} \quad x \in A.	\eqno{(\mathcal{P})}
$$
We suppose that $\{ x \in A \mid f(x) < + \infty \} \ne \emptyset$, and that the optimal value 
$f^* = \inf_{x \in A} f(x)$ of the problem $(\mathcal{P})$ is finite.

The set $A$ represents constraints of the original problem that are not included into an augmented Lagrangian function.
In particular, the constraint $x \in A$ can represent simple constraints (e.g. bound or linear constraints).
It is clear that one can remove this constraint by defining $f(x) = + \infty$ for any $x \notin A$. However,
in the author's opinion, it is more convenient to include this constraint explicitly, since it allows one to
better understand how additional constraints affect the behaviour of the augmented Lagrangian function.

Denote $\overline{\mathbb{R}} = \mathbb{R} \cup \{ + \infty, - \infty \}$ and $\mathbb{R}_+ = [0, + \infty)$.
Let $P$ be a topological vector space of parameters. Recall that a function 
$\Phi \colon X \times P \to \overline{\mathbb{R}}$ is called a \textit{dualizing parameterization function} for $f$ if
$f(x) = \Phi(x, 0)$ for all $x \in X$. A function $\sigma \colon P \to [0, + \infty]$ is called an \textit{augmenting
function} if $\sigma(0) = 0$ and $\sigma(p) > 0$ for any $p \in P \setminus \{ 0 \}$. Below, we suppose that a dualizing
parameterization $\Phi$ for $f$, and an augmenting function $\sigma$ are fixed.

For any $p \in P$ denote by $v(p) = \inf_{x \in A} \Phi(x, p)$ \textit{the optimal value function} (or \textit{the
perturbation function}) of the problem $(\mathcal{P})$ associated with the dualizing parameterization $\Phi$. Note that
$v(0) = f^* > - \infty$.

The following definition of augmented Lagrangian function is a simple generalizations of the one given in
\cite{RockWets}.

\begin{definition}
Let $\Lambda$ be a vector space of \textit{multipliers}, and let the pair $(\Lambda, P)$ be equipped with 
a bilinear coupling function $\langle \cdot, \cdot \rangle \colon \Lambda \times P \to \mathbb{R}$.
\textit{The augmented Lagrangian with penalty parameter} $r \ge 0$ is the function 
$\mathscr{L} \colon X \times \Lambda \times \mathbb{R}_+ \to \overline{\mathbb{R}}$ defined by
$$
  \mathscr{L}(x, \lambda, r) = \inf_{p \in P} \Big( \Phi(x, p) - \langle \lambda, p \rangle + r \sigma(p) \Big).
$$
The corresponding \textit{augmented dual problem} consists of maximizing over all 
$(\lambda, r) \in \Lambda \times \mathbb{R}_+$ the function 
$\psi(\lambda, r) = \inf_{x \in A} \mathscr{L}(x, \lambda, r)$.
\end{definition}

Observe that for any $\lambda \in \Lambda$ and $r \ge 0$ one has $f^* \ge \psi(\lambda, r)$, i.e. the weak duality
holds. Furthermore, under some additional assumption (see \cite{HuangYang2003}, Theorem~2.1, and
\cite{HuangYang2005}, Theorem~2.1) one can show that the zero duality gap property holds true for
$\mathscr{L}(x, \lambda, r)$, i.e. $f^* = \sup\{ \psi(\lambda, r) \mid \lambda \in \Lambda, r \in \mathbb{R}_+ \}$,
if and only if the optimal value function $v$ is lower semicontinuous (l.s.c.) at the origin, and the optimal value of
the augmented dual problem is finite.

Let us also recall the definition of augmented Lagrange multiplier (see~\cite{ShapiroSun2004}).

\begin{definition}
A multiplier $\lambda \in \Lambda$ is called an \textit{augmented Lagrange multiplier} of the problem $(\mathcal{P})$
if there exists $r \ge 0$ such that
\begin{equation} \label{AugmLagrMult_Def}
  v(p) \ge v(0) + \langle \lambda, p \rangle - r \sigma(p) \quad \forall p \in P.
\end{equation}
The greatest lower bound of all such $r \ge 0$ is denoted by $r(\lambda)$, and is referred to as \textit{the least exact
penalty parameter} for $\lambda$. The set of all augmented Lagrange multipliers of $(\mathcal{P})$ is denoted by
$\mathcal{A}(\mathcal{P})$.
\end{definition}

Thus, if $\lambda \in \Lambda$ is an augmented Lagrange multiplier of $(\mathcal{P})$, then for any $r \ge r(\lambda)$
the inequality (\ref{AugmLagrMult_Def}) is valid, while for any $0 \le r < r(\lambda)$ there exists $p \in P$ such
that $v(p) < v(0) + \langle \lambda, p \rangle - r \sigma(p)$.

Augmented Lagrange multipliers of the problem $(\mathcal{P})$ can be described in terms of optimal solutions of the
augmented dual problem, and in terms of saddle points of the augmented Lagrangian. The propositions below are well-known
in the theory of augmented Lagrangian functions (cf.~\cite{RockWets}, Theorem~11.59, and \cite{ShapiroSun2004},
Theorem~2.1), and follow directly from definitions. Therefore we omit their proofs.

\begin{proposition}
If $\lambda^*$ is an augmented Lagrange multiplier of $(\mathcal{P})$, then the zero duality gap property holds true
for $\mathscr{L}(x, \lambda, r)$, and for any $r \ge r(\lambda^*)$ the pair $(\lambda^*, r)$ is a globally optimal
solution of the augmented dual problem. Conversely, if the zero duality gap property holds true 
for $\mathscr{L}(x, \lambda, r)$, then for any globally optimal solution $(\lambda^*, r^*)$ of the
augmented dual problem the vector $\lambda^*$ is an augmented Lagrange multiplier of $(\mathcal{P})$ and 
$r^* \ge r(\lambda^*)$.
\end{proposition}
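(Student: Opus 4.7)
The plan is to reduce everything to the single identity
$$
\psi(\lambda, r) \;=\; \inf_{p \in P} \Bigl( v(p) - \langle \lambda, p \rangle + r \sigma(p) \Bigr),
$$
which is obtained from the definitions by exchanging the two infima in
$\psi(\lambda, r) = \inf_{x \in A} \inf_{p \in P}(\Phi(x,p) - \langle \lambda, p\rangle + r\sigma(p))$. The exchange is legitimate because the terms $-\langle \lambda, p\rangle$ and $r\sigma(p)$ do not depend on $x$, so one pulls $\inf_{x \in A}$ inside and recognises $v(p) = \inf_{x \in A}\Phi(x,p)$. Since $\sigma(0) = 0$, the choice $p = 0$ yields $\psi(\lambda,r) \le v(0) = f^*$, recovering weak duality.

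Next I would observe that the defining inequality $v(p) \ge v(0) + \langle \lambda, p\rangle - r\sigma(p)$ for $\lambda$ to be an augmented Lagrange multiplier with parameter $r$ is exactly the statement that every term in the infimum above is bounded below by $v(0)$. Combined with the trivial upper bound from $p = 0$, this gives the key equivalence: $\lambda$ is an augmented Lagrange multiplier with some admissible parameter $r$ if and only if $\psi(\lambda, r) = f^*$.

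From this equivalence both halves of the proposition are immediate. For the direct implication, let $\lambda^*$ be an augmented Lagrange multiplier and pick a sequence $r_n \downarrow r(\lambda^*)$ of admissible parameters; since $\sigma \ge 0$, passing to the limit in $v(p) \ge v(0) + \langle \lambda^*, p\rangle - r_n\sigma(p)$ shows $r(\lambda^*)$ itself is admissible, hence so is every $r \ge r(\lambda^*)$. Therefore $\psi(\lambda^*, r) = f^*$, and weak duality forces $(\lambda^*, r)$ to be a global maximiser of $\psi$ and the zero duality gap property to hold. For the converse, if the duality gap is zero and $(\lambda^*, r^*)$ maximises $\psi$, then $\psi(\lambda^*, r^*) = f^*$; the equivalence yields that $\lambda^*$ is an augmented Lagrange multiplier with admissible parameter $r^*$, so $r^* \ge r(\lambda^*)$ by definition of $r(\lambda^*)$ as the greatest lower bound of admissible parameters.

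No step is really a serious obstacle; the only thing one must be careful about is the limiting argument showing that the infimum $r(\lambda^*)$ is itself admissible, which is why the statement allows $r \ge r(\lambda^*)$ rather than $r > r(\lambda^*)$. As the authors note, everything else is a direct unpacking of the definitions once the interchange of infima identity for $\psi$ is recorded.
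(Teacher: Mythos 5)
Your proof is correct and follows exactly the route the paper intends: the paper omits the proof as ``following directly from definitions,'' and the interchange-of-infima identity $\psi(\lambda,r)=\inf_{p\in P}\bigl(v(p)-\langle\lambda,p\rangle+r\sigma(p)\bigr)$ together with the resulting equivalence ``$\lambda\in\mathcal{A}(\mathcal{P})$ with parameter $r$ iff $\psi(\lambda,r)=f^*$'' is precisely the argument the paper itself records as Proposition~\ref{Prp_ExactPenaltyRepr}. Your limiting argument showing that $r(\lambda^*)$ is itself admissible is a nice touch (the paper simply asserts this after the definition), and the rest is the same unpacking of weak duality.
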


\begin{proposition} \label{Prp_AugmLagr_GSP}
If $\lambda^*$ is an augmented Lagrange multiplier of $(\mathcal{P})$, then for all $r \ge r(\lambda^*)$ and for any
globally optimal solution $x^*$ of the problem $(\mathcal{P})$ the pair $(x^*, \lambda^*)$ is a global saddle point of
the function $\mathscr{L}(\cdot, \cdot, r)$, i.e.
$$
  \sup_{\lambda \in \Lambda} \mathscr{L}(x^*, \lambda, r) = \mathscr{L}(x^*, \lambda^*, r) = 
  \inf_{x \in A} \mathscr{L}(x, \lambda^*, r).
$$
Conversely, if a pair $(x^*, \lambda^*)$ is a global saddle point of the function $\mathscr{L}(\cdot, \cdot, r)$ for
some $r \ge 0$ and $\mathscr{L}(x^*, \lambda^*, r) = f^*$, then $\lambda^*$ is an augmented Lagrange multiplier of 
the problem $(\mathcal{P})$.
\end{proposition}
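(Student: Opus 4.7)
The plan is to prove both directions by reducing everything to the identity
$$
  \inf_{x \in A} \mathscr{L}(x, \lambda, r) = \inf_{p \in P} \Big( v(p) - \langle \lambda, p \rangle + r \sigma(p) \Big),
$$
obtained by interchanging the two infima in the definitions of $\mathscr{L}$ and $v$. This one identity, together with the trivial upper bound obtained from plugging $p = 0$ into the infimum defining $\mathscr{L}(x,\lambda,r)$, namely $\mathscr{L}(x,\lambda,r) \le \Phi(x,0) = f(x)$, carries essentially all the content of the proposition.

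For the direct implication, I would first fix $r \ge r(\lambda^*)$ and rewrite the augmented Lagrange multiplier inequality (\ref{AugmLagrMult_Def}) as $v(p) - \langle \lambda^*, p\rangle + r\sigma(p) \ge f^*$ for all $p \in P$, with equality at $p = 0$. Combined with the identity above, this gives $\inf_{x \in A} \mathscr{L}(x, \lambda^*, r) = f^*$. Then, for any globally optimal $x^* \in A$, the upper bound $\mathscr{L}(x^*, \lambda^*, r) \le f(x^*) = f^*$ together with $\mathscr{L}(x^*, \lambda^*, r) \ge \inf_{x \in A} \mathscr{L}(x, \lambda^*, r) = f^*$ forces $\mathscr{L}(x^*, \lambda^*, r) = f^*$, which settles the right-hand equality in the saddle-point condition. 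For the left-hand equality, the same upper bound $\mathscr{L}(x^*, \lambda, r) \le f(x^*) = f^*$ is valid for every $\lambda \in \Lambda$, so $\sup_{\lambda} \mathscr{L}(x^*, \lambda, r) \le f^* = \mathscr{L}(x^*, \lambda^*, r)$, and the reverse inequality is trivial.

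For the converse, I would assume $(x^*, \lambda^*)$ is a global saddle point of $\mathscr{L}(\cdot, \cdot, r)$ with $\mathscr{L}(x^*, \lambda^*, r) = f^*$. The saddle-point condition yields $\inf_{x \in A} \mathscr{L}(x, \lambda^*, r) = f^*$, and the identity above then rewrites this as $\inf_{p \in P}(v(p) - \langle \lambda^*, p\rangle + r\sigma(p)) = f^* = v(0)$. Unwinding the infimum gives (\ref{AugmLagrMult_Def}) directly, so $\lambda^*$ is an augmented Lagrange multiplier.

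I do not anticipate any serious obstacle: the argument only uses the definitions of $\mathscr{L}$, $v$, the augmented Lagrange multiplier, and the global saddle point, and the exchange of infima is legitimate without any topological or measurability hypotheses since both are pointwise infima in $\overline{\mathbb{R}}$. The only mild care needed is to justify $\mathscr{L}(x,\lambda,r) \le \Phi(x,0)$ even when $\Phi(x,0) = +\infty$, which is immediate from the convention that the infimum is bounded above by any particular value of the function.
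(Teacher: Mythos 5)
Your proof is correct. The paper states that this proposition ``follows directly from definitions'' and omits the proof entirely, and your argument is exactly the intended direct unwinding: the interchange-of-infima identity $\inf_{x \in A} \mathscr{L}(x, \lambda, r) = \inf_{p \in P} ( v(p) - \langle \lambda, p \rangle + r \sigma(p) )$ together with the bound $\mathscr{L}(x, \lambda, r) \le \Phi(x,0) = f(x)$ is precisely the mechanism the paper relies on here and again in its proof of Proposition~\ref{Prp_ExactPenaltyRepr}.
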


Recall that by definition a vector $\lambda \in \Lambda$ is an augmented Lagrange multiplier of $(\mathcal{P})$ if and
only if the inequality $v(p) \ge v(0) + \langle \lambda, p \rangle - r \sigma(p)$ is valid \textit{for all} $p \in P$.
However, under a natural additional assumption on the augmenting function $\sigma$, it is sufficient to verify that
this inequality is valid only for all $p$ in a neighbourhood of zero.

\begin{definition}
The augmenting function $\sigma$ is said to \textit{have a valley at zero} if for any neighbourhood $U \subset P$ of
zero there exists $\delta > 0$ such that $\sigma(p) \ge \delta$ for all $p \in P \setminus U$.
\end{definition}

The above definition naturally arose in the theory of augmented Lagrangian functions as a replacement of the convexity
assumption on the augmenting function $\sigma$ in \cite{RockWets}, and was utilized in many papers on
this subject (see, e.g. \cite{BurachikRubinov,ZhouYang2009,ZhouYang2012,ZhouZhouYang2014}). Note that in \cite{Penot},
functions similar to the ones having valley at zero were called \textit{potentials}. 

The proposition below, surprisingly, slightly improves all similar results on augmented Lagrange multipliers, since it
does not rely on any continuity and level-boundedness (coercivity) assumptions as well as any assumptions on the optimal
value function $v$ (cf.~\cite{RockWets}, Theorem~11.61; \cite{HuangYang2003}, Theorem~3.1; \cite{ShapiroSun2004},
Lemma~3.1, etc.).

\begin{proposition} \label{Prp_AugmLagrMult_NeighbZero}
Suppose that the augmenting function $\sigma$ has a valley at zero. Then $\lambda \in \Lambda$ is an augmented Lagrange
multiplier of $(\mathcal{P})$ if and only if there exist $r \ge 0$ and a neighbourhood $U \subset P$ of zero such that
\begin{equation} \label{AugmLagrMult_NeighbZero}
  v(p) \ge v(0) + \langle \lambda, p \rangle - r \sigma(p) \quad \forall p \in U ,
\end{equation}
and the function $\mathscr{L}(\cdot, \lambda, r)$ is bounded below on $A$.
\end{proposition}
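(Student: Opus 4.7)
The plan is to work with the two directions separately, and throughout rely on the identity
\[
  \psi(\lambda, r) = \inf_{x \in A} \mathscr{L}(x, \lambda, r)
  = \inf_{p \in P} \Bigl( v(p) - \langle \lambda, p \rangle + r \sigma(p) \Bigr),
\]
which follows by swapping the two infima in the definition of $\mathscr{L}$ and using $\langle \lambda, p \rangle - r \sigma(p)$ being independent of $x$.

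For the forward implication, if $\lambda$ is an augmented Lagrange multiplier with parameter $r \ge r(\lambda)$, then (\ref{AugmLagrMult_Def}) holds for every $p \in P$, in particular on any neighbourhood $U$ of zero. Applying the displayed identity, the same inequality rearranged gives $\psi(\lambda, r) \ge v(0)$, so $\mathscr{L}(\cdot, \lambda, r)$ is bounded below on $A$ by $v(0)$, which is finite by assumption.

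The substantive direction is the converse. Suppose (\ref{AugmLagrMult_NeighbZero}) holds on some neighbourhood $U$ of zero with some $r \ge 0$, and $\mathscr{L}(\cdot, \lambda, r)$ is bounded below on $A$, say by $-M$. Invoking the identity again, this yields the lower bound
\[
  v(p) - \langle \lambda, p \rangle + r \sigma(p) \ge -M \quad \forall p \in P,
\]
equivalently $v(p) \ge v(0) + \langle \lambda, p \rangle - r \sigma(p) - C$ with $C = M + v(0) \ge 0$ (noting $C \ge 0$ because $\psi(\lambda, r) \le v(0)$ by weak duality). Now the valley-at-zero property supplies $\delta > 0$ with $\sigma(p) \ge \delta$ for every $p \in P \setminus U$. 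For such $p$ I rewrite the constant $C$ as $(C/\delta)\sigma(p) \cdot (\delta/\sigma(p)) \le (C/\delta)\sigma(p)$, so
\[
  v(p) \ge v(0) + \langle \lambda, p \rangle - \bigl( r + C/\delta \bigr) \sigma(p) \quad \forall p \in P \setminus U.
\]
For $p \in U$ the hypothesised inequality already gives the same bound (since $\sigma \ge 0$ and the coefficient in front of $\sigma$ only becomes more negative). Thus (\ref{AugmLagrMult_Def}) holds on all of $P$ with the enlarged penalty parameter $r' = r + C/\delta$, proving $\lambda \in \mathcal{A}(\mathcal{P})$.

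The only delicate point is the sign of $C$; the weak duality $\psi(\lambda, r) \le f^* = v(0)$ ensures $-M \le v(0)$ so that $C \ge 0$, which is what lets one absorb $-C$ into the term $-(C/\delta)\sigma(p)$ outside $U$. Everything else is a direct manipulation of infima and uses no continuity, coercivity, or semicontinuity of $v$, matching the improvement over the previously cited results claimed before the statement.
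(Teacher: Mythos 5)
Your proposal is correct and follows essentially the same route as the paper: both arguments take the infimum over $x \in A$ of the lower bound on $\mathscr{L}(\cdot,\lambda,r)$ to get a uniform bound $v(p) - \langle \lambda, p\rangle + r\sigma(p) \ge c$ on all of $P$, then use the valley-at-zero constant $\delta$ to absorb the deficit into an enlarged penalty parameter (your $r + C/\delta$ is exactly the paper's $\overline{\tau} = r + (v(0)-c)/\delta$ with $c = -M$). Packaging the infimum swap as the identity for $\psi(\lambda,r)$ is a cosmetic difference only.
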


\begin{proof}
If $\lambda \in \mathcal{A}(\mathcal{P})$, then (\ref{AugmLagrMult_NeighbZero}) is satisfied with $U = P$. Moreover,
for any $x \in A$ one has
$$
  \Phi(x, p) - \langle \lambda, p \rangle + r \sigma(p) \ge v(0) = f^* > - \infty \quad \forall p \in P.
$$
Taking the infimum over all $p \in P$ one obtains that $\mathscr{L}(x, \lambda, r) \ge f^*$ for any $x \in A$, i.e. 
$\mathscr{L}(\cdot, \lambda, r)$ is bounded below on $A$.

Suppose, now, that there exist $r \ge 0$ and a neighbourhood $U \subset P$ such that (\ref{AugmLagrMult_NeighbZero}) is
valid, and the function $\mathscr{L}(\cdot, \lambda, r)$ is bounded below on $A$. Then there exists $c > 0$ such
that for any $p \in P$ one has
$$
  \Phi(x, p) - \langle \lambda, p \rangle + r \sigma(p) \ge \mathscr{L}(x, \lambda ,r) \ge c \quad \forall x \in A.
$$
Taking the infimum over all $x \in A$ one gets that
$$
  v(p) - \langle \lambda, p \rangle + r \sigma(p) \ge c \quad \forall p \in P.
$$
By the definition of valley at zero there exists $\delta > 0$ such that $\sigma(p) \ge \delta$ for any 
$p \in P \setminus U$. Consequently, for any $p \in P \setminus U$ and for any 
$\tau > \overline{\tau} := r + (v(0) - c) / \delta$ one has
\begin{multline*}
  v(p) - \langle \lambda, p \rangle + \tau \sigma(p) = 
  v(p) - \langle \lambda, p \rangle + r \sigma(p) + (\tau - r) \sigma(p) \ge \\
  \ge v(p) - \langle \lambda, p \rangle + r \sigma(p) + (\tau - r ) \delta \ge c + (\tau - r) \delta \ge v(0).
\end{multline*}
Hence and from (\ref{AugmLagrMult_NeighbZero}) it follows that
$$
  v(p) \ge v(0) + \langle \lambda, p \rangle - \tau \sigma(p) \quad \forall p \in P \quad
  \forall \tau \ge \max\{ \overline{\tau}, r \},
$$
which implies that $\lambda \in \mathcal{A}(\mathcal{P})$.	 
\end{proof}

\begin{remark}
The proposition above can be easily extended to the general case. Namely, for any $\delta > 0$ denote
$K_{\delta} = \{ p \in P \mid \sigma(p) < \delta \}$. Then it is easy to check that 
$\lambda \in \mathcal{A}(\mathcal{P})$ if and only if there exist $r \ge 0$ and $\delta > 0$ such that
$$
  v(p) \ge v(0) + \langle \lambda, p \rangle - r \sigma(p) \quad \forall p \in K_{\delta},
$$
and the function $\mathscr{L}(\cdot, \lambda, r)$ is bounded below on $A$. Note that in the case when $\sigma$ has
a valley at zero, for any neighbourhood $U \subset P$ of zero there exists $\delta > 0$ such that 
$K_{\delta} \subset U$.
\end{remark}

Augmented Lagrange multipliers can be also characterized as those multipliers that \textit{support an exact penalty
representation} for the problem $(\mathcal{P})$ (see~\cite{RockWets}). 

\begin{proposition} \label{Prp_ExactPenaltyRepr}
For $\lambda \in \Lambda$ to be an augmented Lagrange multiplier of $(\mathcal{P})$ it is necessary and sufficient that
$\inf_{x \in A} \mathscr{L}(x, \lambda, r) = f^*$ for some $r \ge 0$. Moreover, the greatest lower bound of all such 
$r \ge 0$ coincides with $r(\lambda)$.
\end{proposition}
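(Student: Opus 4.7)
The plan is to exploit the standard interchange of two infima. Since $\mathscr{L}(x,\lambda,r)$ is itself defined as an infimum over $p\in P$, one has
$$
  \inf_{x \in A} \mathscr{L}(x,\lambda,r)
  = \inf_{x \in A} \inf_{p \in P} \Bigl( \Phi(x,p) - \langle \lambda, p \rangle + r\sigma(p) \Bigr)
  = \inf_{p \in P} \Bigl( v(p) - \langle \lambda, p \rangle + r\sigma(p) \Bigr),
$$
where the last equality uses the definition $v(p) = \inf_{x\in A}\Phi(x,p)$ together with the fact that the terms $\langle\lambda,p\rangle$ and $r\sigma(p)$ do not depend on $x$. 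This identity, which is the only real content of the argument, rewrites the primal-side quantity $\psi(\lambda,r)$ purely in terms of the perturbation function.

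Next I would note that plugging $p=0$ into the right-hand side yields $v(0)=f^*$, so the infimum over $p$ is automatically bounded above by $f^*$ (this is nothing but weak duality). Consequently, the equality $\inf_{x \in A}\mathscr{L}(x,\lambda,r) = f^*$ is equivalent to the single reverse inequality
$$
  v(p) - \langle \lambda, p \rangle + r\sigma(p) \ge v(0) \quad \forall p \in P,
$$
which is exactly the defining inequality (\ref{AugmLagrMult_Def}) of an augmented Lagrange multiplier with parameter $r$. Both directions of the first assertion then follow immediately: if $\lambda \in \mathcal{A}(\mathcal{P})$, take any $r \ge r(\lambda)$; conversely, if $\inf_{x\in A}\mathscr{L}(x,\lambda,r)=f^*$ for some $r\ge 0$, the displayed inequality says $\lambda \in \mathcal{A}(\mathcal{P})$.

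For the "moreover" part, the reasoning above shows that the sets
$$
  \bigl\{\, r \ge 0 \,\big|\, \textstyle\inf_{x\in A}\mathscr{L}(x,\lambda,r) = f^* \bigr\}
  \quad \text{and} \quad
  \bigl\{\, r \ge 0 \,\big|\, (\ref{AugmLagrMult_Def}) \text{ holds} \bigr\}
$$
coincide, so their greatest lower bounds agree and equal $r(\lambda)$ by definition.

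There is essentially no obstacle here: the entire proof is the swap of nested infima plus the observation that the value at $p=0$ forces $\psi(\lambda,r) \le f^*$ automatically. The only point that requires minor care is the degenerate case $v(p) = -\infty$ for some $p$ with $\sigma(p)<+\infty$, but then both the right-hand side of (\ref{AugmLagrMult_Def}) fails and $\psi(\lambda,r) = -\infty \ne f^*$, so the equivalence is preserved.
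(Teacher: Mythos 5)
Your proposal is correct and follows essentially the same route as the paper: the paper likewise observes that $\mathscr{L}(x,\lambda,r)\le\Phi(x,0)=f(x)$ forces $\inf_{x\in A}\mathscr{L}(x,\lambda,r)\le f^*$, and then shows that the defining inequality \eqref{AugmLagrMult_Def} is equivalent to $\Phi(x,p)-\langle\lambda,p\rangle+r\sigma(p)\ge f^*$ on $A\times P$, which after taking the (interchangeable) infima over $p$ and $x$ is exactly your identity $\inf_{x\in A}\mathscr{L}(x,\lambda,r)=\inf_{p\in P}\bigl(v(p)-\langle\lambda,p\rangle+r\sigma(p)\bigr)\ge f^*$. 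The ``moreover'' part is handled identically, since the equivalence holds for each fixed $r$ separately.
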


\begin{proof}
At first, note that $\inf_{x \in A} \mathscr{L}(x, \lambda, r) \le f^*$ for all $\lambda \in \Lambda$ due to the fact
that $\mathscr{L}(x, \lambda, r) \le \Phi(x, 0) - \langle \lambda, 0 \rangle + r \sigma(0) = f(x)$ 
for any $\lambda \in \Lambda$ and $x \in A$.

By definition, $\lambda \in \mathcal{A}(\mathcal{P})$ iff for some $r \ge 0$ one has
\begin{equation} \label{AugmLagrMult_InequalFromDef}
  v(p) \ge v(0) + \langle \lambda, p \rangle - r \sigma(p) \quad \forall p \in P.
\end{equation}
This inequality, in turn, is satisfied iff 
$$
  \Phi(x, p) - \langle \lambda, p \rangle + r \sigma(p) \ge v(0) = f^* \quad \forall (x, p) \in A \times P.
$$
Taking the infimum over all $p \in P$, and then over all $x \in A$ one obtains that (\ref{AugmLagrMult_InequalFromDef})
is valid for some $r \ge 0$ iff $\inf_{x \in A} \mathscr{L}(x, \lambda, r) \ge f^*$, which implies the desired result.
 
\end{proof}

\begin{remark}
Note that from the proposition above it follows that if $\mathcal{A}(\mathcal{P}) \ne \emptyset$, then the zero duality
gap property holds true for $\mathscr{L}(x, \lambda, r)$.
\end{remark}

\begin{corollary} \label{Crlr_ExactPenRepresentation}
Let $\lambda \in \Lambda$ be an augmented Lagrange multiplier of $(\mathcal{P})$. Then
$$
  \argmin_{x \in A} f(x) \subseteq \argmin_{x \in A} \mathscr{L}(x, \lambda, r) \quad \forall r \ge r(\lambda).
$$
Moreover, the equality
\begin{equation} \label{GlobMinEquiv}
  \argmin_{x \in A} f(x) = \argmin_{x \in A} \mathscr{L}(x, \lambda, r) \quad \forall r \ge r_0;
\end{equation}
holds true for some $r_0 \ge 0$ if and only if there exists $r_0 > r(\lambda)$ such that for any 
$x^* \in \argmin_{x \in A} \mathscr{L}(x, \lambda, r_0)$ one has
\begin{equation} \label{ExistsOfMinimizers}
  \argmin_{p \in P} \Big( \Phi(x^*, p) - \langle \lambda, p \rangle + r_0 \sigma(p) \Big) \ne \emptyset
\end{equation}
(i.e. the infimum in the definition of $\mathscr{L}(x^*, \lambda, r_0)$ is attained).
\end{corollary}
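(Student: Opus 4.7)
The plan is to handle the first inclusion directly from the exact penalty representation of Proposition~\ref{Prp_ExactPenaltyRepr}, and then split the equivalence into its two directions, using the monotonicity of $r \mapsto \mathscr{L}(x, \lambda, r)$ and the strict inequality $r_0 > r(\lambda)$ as the main levers.

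First I would show the inclusion $\argmin_{x \in A} f(x) \subseteq \argmin_{x \in A} \mathscr{L}(x, \lambda, r)$ for every $r \ge r(\lambda)$. Taking any $x^* \in \argmin_{x \in A} f(x)$, the inequality $\mathscr{L}(x^*, \lambda, r) \le \Phi(x^*, 0) = f(x^*) = f^*$ (coming from the $p = 0$ choice in the infimum) combined with $\mathscr{L}(x^*, \lambda, r) \ge \inf_{x \in A} \mathscr{L}(x, \lambda, r) = f^*$ from Proposition~\ref{Prp_ExactPenaltyRepr} gives $\mathscr{L}(x^*, \lambda, r) = f^* = \inf_{x \in A} \mathscr{L}(x, \lambda, r)$, as required.

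For the forward direction of the equivalence, I would simply take any $r_0' \ge \max\{r_0, r(\lambda)\}$ with $r_0' > r(\lambda)$ (for instance, $r_0' = \max\{r_0, r(\lambda) + 1\}$). Then (\ref{GlobMinEquiv}) holds at $r_0'$, so every $x^* \in \argmin_{x \in A} \mathscr{L}(x, \lambda, r_0')$ belongs to $\argmin_{x \in A} f(x)$; hence $\Phi(x^*, 0) - \langle \lambda, 0 \rangle + r_0' \sigma(0) = f(x^*) = f^* = \mathscr{L}(x^*, \lambda, r_0')$, which shows that the infimum in the definition of $\mathscr{L}(x^*, \lambda, r_0')$ is attained at $p = 0$, verifying (\ref{ExistsOfMinimizers}) with $r_0$ replaced by $r_0'$.

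The backward direction is where the actual work lies and is the main obstacle. Assume $r_0 > r(\lambda)$ is as in the hypothesis; I claim (\ref{GlobMinEquiv}) holds for all $r \ge r_0$. The inclusion $\subseteq$ is already established. For $\supseteq$, I would first observe that since $\sigma \ge 0$, the map $r \mapsto \mathscr{L}(x, \lambda, r)$ is nondecreasing, so for any $r \ge r_0$ and any $x^* \in \argmin_{x \in A} \mathscr{L}(x, \lambda, r)$ we have $f^* = \mathscr{L}(x^*, \lambda, r) \ge \mathscr{L}(x^*, \lambda, r_0) \ge f^*$ (the last inequality by Proposition~\ref{Prp_ExactPenaltyRepr}), hence $x^* \in \argmin_{x \in A} \mathscr{L}(x, \lambda, r_0)$. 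By hypothesis, there exists $p^* \in P$ with
\[
  \Phi(x^*, p^*) - \langle \lambda, p^* \rangle + r_0 \sigma(p^*) = \mathscr{L}(x^*, \lambda, r_0) = f^*.
\]
Using $\Phi(x^*, p^*) \ge v(p^*)$ and the augmented Lagrange multiplier inequality $v(p^*) \ge f^* + \langle \lambda, p^* \rangle - r(\lambda) \sigma(p^*)$ (valid at $r(\lambda)$ itself), I would chain the two estimates to obtain $(r_0 - r(\lambda)) \sigma(p^*) \le 0$. Since $r_0 > r(\lambda)$ and $\sigma \ge 0$, this forces $\sigma(p^*) = 0$, and the definition of an augmenting function gives $p^* = 0$; substituting back yields $f(x^*) = \Phi(x^*, 0) = f^*$, so $x^* \in \argmin_{x \in A} f(x)$. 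The delicate point is precisely the use of the strict inequality $r_0 > r(\lambda)$, which is what allows the conclusion $p^* = 0$; the hypothesis would fail without it.
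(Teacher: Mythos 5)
Your proposal is correct and follows essentially the same route as the paper: the inclusion via the $p=0$ estimate combined with $\inf_{x\in A}\mathscr{L}(x,\lambda,r)=f^*$ from Proposition~\ref{Prp_ExactPenaltyRepr}, the forward direction by evaluating at a parameter strictly above $r(\lambda)$, and the backward direction by using the strict gap $r_0>r(\lambda)$ to force $\sigma(p^*)=0$, hence $p^*=0$, together with monotonicity of $\mathscr{L}$ in $r$. The only cosmetic difference is that you express the key inequality through $v(p^*)$ (the defining augmented Lagrange multiplier inequality) where the paper uses the equivalent pointwise bound $\Phi(x,p)-\langle\lambda,p\rangle+r(\lambda)\sigma(p)\ge f^*$, and you are slightly more explicit about choosing $r_0'>r(\lambda)$ in the forward direction.
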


\begin{proof}
By the proposition above one has
\begin{equation} \label{InfOfAugmLagrFunc}
  \inf_{x \in A} \mathscr{L}(x, \lambda, r) = f^* \quad \forall r \ge r(\lambda).
\end{equation}
Let $x^*$ be a globally optimal solution of the problem $(\mathcal{P})$. Then
\begin{equation} \label{ParametrizationAtZeroAndOptSol}
  \Phi(x^*, 0) - \langle \lambda, 0 \rangle + r \sigma(0) = f(x^*) = f^* \quad \forall r \ge 0,
\end{equation}
which yields $\mathscr{L}(x^*, \lambda, r) \le f^*$. Therefore $x^*$ is a point of global minimum of 
$\mathscr{L}(\cdot, \lambda, r)$ on the set $A$ for any $r \ge r(\lambda)$.

Suppose, now, that \eqref{ExistsOfMinimizers} holds true for some $r_0 > r(\lambda)$, and $x^*$ is a global minimizer
of $\mathscr{L}(\cdot, \lambda, r_0)$ on $A$. Then there exists $p^* \in P$ such that
$$
  \mathscr{L}(x^*, \lambda, r_0) = \Phi(x^*, p^*) - \langle \lambda, p^* \rangle + r_0 \sigma(p^*).
$$
From \eqref{InfOfAugmLagrFunc} it follows that
$$
  \Phi(x, p) - \langle \lambda, p \rangle + r(\lambda) \sigma(p) \ge f^* \quad 
  \forall (x, p) \in A \times P \quad \forall r \ge r(\lambda).
$$
Recall that $\sigma(p) = 0$ iff $p = 0$. Therefore for any $r > r(\lambda)$ one has
$$
  \Phi(x, p) - \langle \lambda, p \rangle + r \sigma(p) > f^* \quad \forall x \in A \quad
  \forall p \in P \setminus \{ 0 \}.
$$
Hence taking into account \eqref{InfOfAugmLagrFunc} and the definitions of $x^*$ and $p^*$ one gets that $p^* = 0$ and
$\mathscr{L}(x^*, \lambda, r_0) = \Phi(x^*, 0) = f(x^*) = f^*$. Therefore $x^*$ is a globally optimal solution of the
problem $(\mathcal{P})$, i.e.
$$
  \argmin_{x \in A} f(x) = \argmin_{x \in A} \mathscr{L}(x, \lambda, r_0)
$$
Applying the fact that the function $\mathscr{L}(x, \lambda, r)$ is non-decreasing in $r$ one can easily verify that 
equality \eqref{GlobMinEquiv} is valid.		

Suppose, finally, that \eqref{GlobMinEquiv} holds true. Then taking into account \eqref{InfOfAugmLagrFunc} and
\eqref{ParametrizationAtZeroAndOptSol} one obtains that
for any $r \ge \max\{ r_0, r(\lambda) \}$ and 
$x^* \in \argmin_{x \in A} \mathscr{L}(x^*, \lambda, r)$ one has 
$\mathscr{L}(x^*, \lambda, r) = \Phi(x^*, 0) - \langle \lambda, 0 \rangle + r \sigma(0)$, i.e. $p = 0$ belongs to the
set on the left-hand side of \eqref{ExistsOfMinimizers}.  
\end{proof}

\begin{remark}
Note that the validity of equality \eqref{GlobMinEquiv} means that for any $r \ge r_0$ the problem $(\mathcal{P})$ is
equivalent (in terms of globally optimal solutions) to the problem of minimizing $\mathscr{L}(\cdot, \lambda, r)$ over 
the set~$A$.
\end{remark}

\section{Reduction to exact penalty functions}
\label{Sect_ReductionToExPenFunc}

In this section, we define a penalty function associated with the augmented Lagrangian $\mathscr{L}(x, \lambda, r)$, and
demonstrate how the exactness of this penalty function is connected with the existence of augmented Lagrange
multipliers of the problem $(\mathcal{P})$. Thus, the main results of this section allow one to use the well-developed
theory of exactness of penalty functions \cite{Dolgopolik} in order to prove the existence of augmented Lagrange
multipliers.

Choose a nonempty set $C \subseteq P$. For any $x \in X$ and $r \ge 0$ define
$$
  F(x, r, C) = \inf_{p \in C} \Big( \Phi(x, p) + r \sigma(p) \Big).
$$
If $C = P$, then we write $F(x, r)$ instead of $F(x, r, P)$. The function $F$ is called \textit{the penalty function}
associated with the augmented Lagrangian $\mathscr{L}(x, \lambda, r)$. 

The penalty function $F(x, r, C)$ is said to be \textit{exact} if there exists $r \ge 0$ such that $F(x, r, C) \ge f^*$
for all $x \in A$. It should be noted that this definition of exactness of a penalty function is equivalent to the
traditional one in the context of linear penalty functions (see, e.g.,~\cite{Dolgopolik}, Remark~8).

\begin{remark}
It is worth mentioning that $F(x, r) \equiv \mathscr{L}(x, 0, r)$, and the exactness of the penalty function $F(x, r)$
is equivalent to the fact that $\lambda = 0$ is an augmented Lagrange multiplier of $(\mathcal{P})$ 
(see Proposition~\ref{Prp_ExactPenaltyRepr})
\end{remark}

Let us present a simple example illuminating the notion of the penalty function associated with the augmented Lagrangian
function.

\begin{example} \label{Exmpl_AssociatedPenFunc}
Suppose that the initial optimization problem has the form
$$
  \min f_0(x) \quad \text{subject to} \quad 0 \in G(x), \quad x \in A,		\eqno{(\mathcal{M})}	
$$
where $f_0 \colon X \to \mathbb{R}$, and $G \colon X \rightrightarrows P$ is a set-valued mapping with closed values.
Define $f(x) = f_0(x)$, if $0 \in G(x)$, and $f(x) = + \infty$, otherwise. Then the problem $(\mathcal{P})$ is
equivalent to the problem $(\mathcal{M})$. Define
$$
  \Phi(x, p) = \begin{cases}
    f_0(x), & \text{if } 0 \in G(x) + p, \\
    + \infty, & \text{otherwise}
  \end{cases}
$$
(we call this dualizing parameterization \textit{standard}). Then for any $C \subset P$, $x \in X$ and $r \ge 0$ one has
$$
  F(x, r, C) = f_0(x) + r \inf\{ \sigma(p) \mid p \in (-G(x)) \cap C \}.
$$
In particular, if $P$ is a normed space, and $\sigma(p) = \omega(\| p \|)$, where 
$\omega \colon [0, + \infty] \to [0, + \infty]$ is a non-decreasing function, then
$$
  F(x, r, C) = f_0(x) + r \omega\Big( d\big( 0, (-G(x)) \cap C \big) \Big).
$$
In the case $\sigma(p) = \| p \|$, one gets that $F(x, r) = f_0(x) + r d(0, G(x))$. Thus, the standard penalty
function $F(x, r) = f_0(x) + r d(0, G(x))$ for $(\mathcal{M})$ is associated with the sharp Lagrangian
(see~\cite{RockWets}, Example~11.58). Similarly, the quadratic penalty function $F(x, r) = f_0(x) + (r/2) d(0, G(x))^2$
is associated with the proximal Lagrangian (i.e. the augmented Lagrangian with $\sigma(p) = \| p \|^2 / 2$).

Note also that if $C = \{ p \in P \mid \| p \| < \delta \}$ for some $\delta > 0$, then
$F(x, r, C) = f_0(x) + \omega(d(0, G(x))$, if $d(0, G(x)) < \delta$, and $F(x, r, C) = + \infty$, otherwise.
\end{example}

The penalty function $F(x, r)$ can be utilized to obtain a simple characterization of augmented Lagrange multipliers
of the problem $(\mathcal{P})$ in the case when $\mathscr{L}(x, \lambda, r)$ is the sharp Lagrangian.

\begin{proposition} \label{Prp_SharpLagrEquivExactPenalty}
Let $P$ be a normed space, and assume that for any $\lambda \in \Lambda$ there exists $d(\lambda) \ge 0$ such that
$|\langle \lambda, p \rangle| \le d(\lambda) \| p \|$ for all $p \in P$. Suppose also that 
$\sigma(\cdot) = \| \cdot \|$. Then an augmented Lagrange multiplier of $(\mathcal{P})$ exists if and only if the
penalty function $F(x, r)$ is exact. Moreover, if $\mathcal{A}(\mathcal{P}) \ne \emptyset$, then 
$\mathcal{A}(\mathcal{P}) = \Lambda$.
\end{proposition}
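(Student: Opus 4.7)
The plan is to exploit the fact that the augmenting function $\sigma(p) = \|p\|$ dominates every coupling term $\langle \lambda, p\rangle$ up to a multiplicative constant $d(\lambda)$, so the coupling term can be absorbed into, or manufactured out of, the penalty term $r\sigma(p)$ at the cost of shifting $r$ by $d(\lambda)$. This means that, for this particular choice of $\sigma$, the entire family of inequalities defining augmented Lagrange multipliers collapses into a single condition, independent of $\lambda$.

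First I would record the reduction to the zero multiplier: as already observed in the remark preceding the proposition, $F(x,r) = \mathscr{L}(x,0,r)$, so by Proposition~\ref{Prp_ExactPenaltyRepr} the penalty function $F(x,r)$ is exact if and only if $0 \in \mathcal{A}(\mathcal{P})$. With this in hand, the ``if'' direction of the equivalence is immediate: exactness of $F$ gives $0 \in \mathcal{A}(\mathcal{P})$, hence $\mathcal{A}(\mathcal{P}) \neq \emptyset$.

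For the ``only if'' direction, I would take any $\lambda^* \in \mathcal{A}(\mathcal{P})$ with least exact penalty parameter $r(\lambda^*)$. For every $r > r(\lambda^*)$ we have
\begin{equation*}
  v(p) \geq v(0) + \langle \lambda^*, p \rangle - r\|p\| \quad \forall p \in P.
\end{equation*}
Using the hypothesis $\langle \lambda^*, p \rangle \geq -d(\lambda^*)\|p\|$, this gives
\begin{equation*}
  v(p) \geq v(0) - \bigl(r + d(\lambda^*)\bigr)\|p\| \quad \forall p \in P,
\end{equation*}
which is exactly the defining inequality for $0$ being an augmented Lagrange multiplier with penalty parameter $r + d(\lambda^*)$. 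Hence $0 \in \mathcal{A}(\mathcal{P})$ and, by the preliminary observation, $F(x,r)$ is exact.

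For the ``moreover'' part I would run the same trick in reverse. Assume $\mathcal{A}(\mathcal{P}) \neq \emptyset$; by what was just shown, $0 \in \mathcal{A}(\mathcal{P})$, so there exists $r_0 \geq 0$ with $v(p) \geq v(0) - r_0\|p\|$ for all $p \in P$. Given any $\lambda \in \Lambda$, the bound $\langle \lambda, p \rangle \leq d(\lambda)\|p\|$ yields $\langle \lambda, p \rangle - (r_0 + d(\lambda))\|p\| \leq -r_0\|p\|$, and therefore
\begin{equation*}
  v(p) \geq v(0) - r_0\|p\| \geq v(0) + \langle \lambda, p \rangle - \bigl(r_0 + d(\lambda)\bigr)\|p\|
\end{equation*}
for all $p \in P$, which says $\lambda \in \mathcal{A}(\mathcal{P})$. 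Thus $\mathcal{A}(\mathcal{P}) = \Lambda$.

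I do not anticipate any real obstacle; the entire argument is driven by the homogeneity matching between $\sigma(p) = \|p\|$ and the linear bound on the coupling. The only thing to be careful about is to invoke the hypothesis with the correct sign (using $\langle \lambda, p\rangle \geq -d(\lambda)\|p\|$ in one direction and $\langle \lambda, p\rangle \leq d(\lambda)\|p\|$ in the other), and to use Proposition~\ref{Prp_ExactPenaltyRepr} to translate between exactness of $F$ and membership of $0$ in $\mathcal{A}(\mathcal{P})$.
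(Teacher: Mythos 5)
Your proof is correct and follows essentially the same route as the paper's: both arguments absorb the coupling term $\langle \lambda, p \rangle$ into the penalty term via the bound $|\langle \lambda, p \rangle| \le d(\lambda)\|p\|$, at the cost of shifting the penalty parameter by $d(\lambda)$. The only cosmetic difference is that you run this estimate on the optimal value function $v(p)$ and reduce everything to the case $\lambda = 0$, whereas the paper runs the same estimate on $\Phi(x,p)$ and compares $\mathscr{L}(x,\lambda,r)$ with $F(x, r \pm d(\lambda))$ directly through Proposition~\ref{Prp_ExactPenaltyRepr}.
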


\begin{proof}
Suppose that $\mathcal{A}(\mathcal{P}) \ne \emptyset$ and $\lambda \in \mathcal{A}(\mathcal{P})$. Then by
Proposition~\ref{Prp_ExactPenaltyRepr} one has that
$\inf_{x \in A} \mathscr{L}(x, \lambda, r) = f^*$ for all $r \ge r(\lambda)$. Observe that for any $x \in X$ and 
$p \in P$ one has
$$
  \Phi(x, p) - \langle \lambda, p \rangle + r \sigma(p) \le
  \Phi(x, p) + (d(\lambda) + r) \| p \|.
$$
Taking the infimum over all $p \in P$ one obtains that
$$
  f^* \le \mathscr{L}(x, \lambda, r) \le F(x, d(\lambda) + r) \quad \forall x \in A \quad \forall r \ge r(\lambda),
$$
which implies that the penalty function $F(x, r)$ is exact.

Suppose, now, that $F(x, r)$ is exact. For any $\lambda \in \Lambda$ and $r \ge 0$ one has
$$
  \Phi(x, p) - \langle \lambda, p \rangle + r \sigma(p) \ge
  \Phi(x, p) + (r - d(\lambda)) \| p \| \quad \forall (x, p) \in X \times P,
$$
which implies that $\mathscr{L}(x, \lambda, r) \ge F(x, r - d(\lambda))$ for any $x \in X$ and $r \ge d(\lambda)$.
Therefore taking into account the fact that the penalty function $F(x, r)$ is exact one obtains that for any
sufficiently large $r$ one has that $\mathscr{L}(x, \lambda, r) \ge f^*$ for all $x \in A$. Hence with the use of
Proposition~\ref{Prp_ExactPenaltyRepr} one gets that $\lambda$ is an augmented Lagrange
multiplier of the problem $(\mathcal{P})$, and $\mathcal{A}(\mathcal{P}) = \Lambda$.	 
\end{proof}

\begin{remark}
Consider the problem $(\mathcal{M})$ from Example~\ref{Exmpl_AssociatedPenFunc}. Let $P$ be a normed space,
$\sigma(\cdot) = \| \cdot \|$, and let $\Phi$ be the standard dualizing parameterization for this problem.
Then $\mathscr{L}(x, \lambda, r)$ is the sharp Lagrangian.

From the proposition above it follows that an augmented Lagrange multiplier of the problem $(\mathcal{P})$ exists, in
the case when $\mathscr{L}$ is the sharp Lagrangian, if and only if the penalty function 
$F(x, r) = f_0(x) + r d(0, G(x))$ is exact. In particular, by \cite{Dolgopolik}, Remark~18 an augmented Lagrange
multiplier of $(\mathcal{P})$ exists iff the function $f_0(x) + r d(0, G(x))$ is bounded below on $A$ for some 
$r \ge 0$, and the optimal value function $v(p) = \inf\{ f_0(x) \mid x \in A \colon 0 \in G(x) + p \}$ is calm from
below at the origin. Moreover, by the previous proposition, if there exists an augmented Lagrange multiplier of the
problem $(\mathcal{P})$ in the case when $\mathscr{L}$ is the sharp Lagrangian, then \textit{any} multiplier 
$\lambda \in \Lambda$ is an augmented Lagrange multiplier of the problem $(\mathcal{P})$ (cf.
\cite{BurachikIusemMelo2015}, Corollary~3.8). This result, in particular, explains the rapid convergence (only few
iterations are needed to find a good approximation of a global minimizer) of the primal-dual method based on the use of
the sharp Lagrangian \cite{Gasimov,BurachikGasimovEtAl}.

Thus, the only potential benefit of the use of the sharp Lagrangian instead of the standard exact penalty function for
the problem $(\mathcal{M})$ is the smaller value of the least exact penalty parameter $r(\lambda)$ for some 
$\lambda \in \Lambda$ than the value of the least exact penalty parameter $r(0)$ of the penalty function. However, it
should be noted that for some problems one has $r(\lambda) > r(0)$ for any $\lambda \ne 0$. In particular, it is easy
to see that for the problem
\begin{equation} \label{SimpleExampleProblem}
  \min x_1^2 - |x_2| \quad \text{subject to} \quad x_2 = 0
\end{equation}
with $P = \Lambda = \mathbb{R}$ and $\sigma(\cdot) = | \cdot |$ one has 
$$
  \mathscr{L}(x_1, x_2, \lambda, r) = x_1^2 - |x_2| + \lambda x_2 + r |x_2| \quad 
  \forall \lambda \in \mathbb{R}, \: r \ge 0,
$$
and $r(\lambda) = 1 + |\lambda|$ for any $\lambda \in \Lambda$. Therefore $r(\lambda) > r(0)$ for any 
$\lambda \ne 0$, where $r(0)$ is, in fact, the least exact penalty parameter of the $\ell_1$ penalty function for the
problem \eqref{SimpleExampleProblem}.
\end{remark}

Further results on a connection between the existence of augmented Lagrange multipliers and the exactness of the
penalty function $F(x, r, C)$ can be obtained under a simple growth assumption on the augmenting function $\sigma$.
Namely, if $\sigma$ grows near the origin at least as fast as the norm $\| p \|$, then, roughly speaking, an augmented
Lagrange multiplier of $(\mathcal{P})$ exists iff the penalty function $F(x, r, C)$ is exact with $C$ being a
neighbourhood of zero.

\begin{proposition}
Let $P$ be a normed space, and let for any $\lambda \in \Lambda$ there exists $d(\lambda) \ge 0$ such that
$|\langle \lambda, p \rangle| \le d(\lambda) \| p \|$ for all $p \in P$. Suppose also that 
$\liminf_{p \to 0} \sigma(p) / \| p \| > 0$, and $\sigma$ has a valley at zero. Then an augmented Lagrange multiplier of
$(\mathcal{P})$ exists if and only if there exist a neighbourhood $U \subset P$ of zero, and $\lambda \in \Lambda$ such
that the penalty function $F(x, r, U)$ is exact, and the function  $\mathscr{L}(\cdot, \lambda, r)$ is bounded below on
$A$ for some $r \ge 0$. Furthermore, if $\mathcal{A}(\mathcal{P}) \ne \emptyset$, then $\mathcal{A}(\mathcal{P})$
consists of all those $\lambda \in \Lambda$ for which $\mathscr{L}(\cdot, \lambda, r)$ is bounded below on $A$ for some
$r \ge 0$.
\end{proposition}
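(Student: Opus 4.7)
The plan is to establish both directions of the equivalence and then read off the ``furthermore'' clause as a consequence. Two auxiliary facts drive everything. First, the growth assumption $\liminf_{p \to 0} \sigma(p)/\|p\| > 0$ yields a neighbourhood $V$ of zero and a constant $\alpha > 0$ such that $\sigma(p) \ge \alpha \|p\|$ for all $p \in V$. Second, the valley-at-zero property supplies, for every neighbourhood $W$ of zero, a constant $\delta > 0$ with $\sigma(p) \ge \delta$ on $P \setminus W$. Together these let the linear term $\langle \lambda, p \rangle$, bounded in absolute value by $d(\lambda)\|p\|$, be absorbed into $r\sigma(p)$ near the origin, and let a crude lower bound on $\mathscr{L}(\cdot, \lambda, r)$ be boosted up to $f^*$ away from the origin.

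For necessity, take $\lambda_0 \in \mathcal{A}(\mathcal{P})$ with some $r_0 \ge r(\lambda_0)$, so that $\Phi(x,p) - \langle \lambda_0, p\rangle + r_0 \sigma(p) \ge f^*$ on $A \times P$. Taking the infimum over $p$ immediately gives $\mathscr{L}(\cdot, \lambda_0, r_0) \ge f^*$ on $A$, which settles the boundedness requirement with $\lambda := \lambda_0$. To extract an exact $F(\cdot, r, U)$, I would rewrite this inequality as
$$
  \Phi(x,p) + r\sigma(p) \ge f^* + \langle \lambda_0, p\rangle + (r - r_0)\sigma(p),
$$
and, restricting to $U := V$ and using $\langle \lambda_0, p\rangle \ge -d(\lambda_0)\|p\|$ together with $(r-r_0)\sigma(p) \ge (r-r_0)\alpha \|p\|$, bound the right-hand side from below by $f^* + ((r-r_0)\alpha - d(\lambda_0))\|p\|$, which is $\ge f^*$ as soon as $r \ge r_0 + d(\lambda_0)/\alpha$. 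Taking the infimum over $p \in U$ yields $F(x, r, U) \ge f^*$ on $A$.

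For sufficiency, assume $F(\cdot, r_0, U_0) \ge f^*$ on $A$ and $\mathscr{L}(\cdot, \lambda, r_1) \ge c$ on $A$ for some $\lambda \in \Lambda$. Put $U := U_0 \cap V$. The same absorption trick applied inside the infimum defining $F$ gives $\Phi(x, p) - \langle \lambda, p\rangle + r\sigma(p) \ge f^*$ for every $(x,p) \in A \times U$ provided $r \ge r_2 := \max\{r_1,\, r_0 + d(\lambda)/\alpha\}$. Outside $U$, the valley property furnishes $\delta > 0$ with $\sigma(p) \ge \delta$; combined with $\mathscr{L}(\cdot, \lambda, r_1) \ge c$ and $r \ge r_2 \ge r_1$, this gives
$$
  \Phi(x, p) - \langle \lambda, p\rangle + r\sigma(p) \ge c + (r - r_2)\delta \ge f^*
$$
once $r \ge r_2 + (f^* - c)/\delta$. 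Taking the infimum over $x \in A$ for such an $r$ yields $v(p) \ge v(0) + \langle \lambda, p\rangle - r\sigma(p)$ on all of $P$, so $\lambda \in \mathcal{A}(\mathcal{P})$; in particular, $\mathcal{A}(\mathcal{P}) \ne \emptyset$.

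The furthermore clause now drops out. The inclusion $\mathcal{A}(\mathcal{P}) \subseteq \{\lambda : \mathscr{L}(\cdot, \lambda, r) \text{ bounded below on } A\}$ was already noted above. For the reverse inclusion, given any such $\lambda$ together with some $\lambda_0 \in \mathcal{A}(\mathcal{P})$, necessity produces an exact $F(\cdot, r, U)$, and the sufficiency argument run with this exact penalty function and this $\lambda$ places $\lambda$ in $\mathcal{A}(\mathcal{P})$. I expect the main technical point to be coordinating the two regimes — making the choice of $r$ large enough simultaneously so that the growth estimate controls the linear term on $U$ and the valley-at-zero estimate lifts the bound $c$ up to $f^*$ off $U$ — but no estimate is individually hard once the two auxiliary facts are in place.
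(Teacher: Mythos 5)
Your proof is correct and follows essentially the same route as the paper: the growth estimate $\sigma(p)\ge\alpha\|p\|$ near the origin absorbs the linear term $\langle\lambda,p\rangle$ into $r\sigma(p)$ in both directions, and the valley-at-zero property lifts the lower bound on $\mathscr{L}(\cdot,\lambda,r_1)$ up to $f^*$ away from the origin. The only difference is cosmetic: the paper delegates the ``outside $U$'' step to its Proposition~\ref{Prp_AugmLagrMult_NeighbZero}, whereas you inline that argument.
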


\begin{proof}
Suppose that $\mathcal{A}(\mathcal{P}) \ne \emptyset$ and $\lambda \in \mathcal{A}(\mathcal{P})$. Then by
Proposition~\ref{Prp_ExactPenaltyRepr} one has that $\inf_{x \in A} \mathscr{L}(x, \lambda, r) = f^*$ for all 
$r \ge r(\lambda)$. From the definition of limit inferior it follows that there exist a neighbourhood $U \subset P$ of
zero and $\sigma_0 > 0$ such that 
\begin{equation} \label{AugmentingFuncLowerEstimAtZero}
  \sigma(p) \ge \sigma_0 \| p \| \quad \forall p \in U.
\end{equation}
Therefore for any $x \in X$, $p \in U$ and $r \ge 0$ one has
$$
  \Phi(x, p) - \langle \lambda, p \rangle + r \sigma(p) \le
  \Phi(x, p) + \left( \frac{d(\lambda)}{\sigma_0} + r \right) \sigma(p).
$$
Consequently, taking the infimum over all $p \in U$ one obtains that for any $x \in A$ and $r \ge r(\lambda)$ the
following inequalities hold true
$$
  f^* \le \mathscr{L}(x, \lambda, r) \le 
  \inf_{p \in U} \Big( \Phi(x, p) - \langle \lambda, p \rangle + r \sigma(p) \Big) \le 
  F\left( x, \frac{d(\lambda)}{\sigma_0} + r, U \right),
$$
which implies that the penalty function $F(x, r, U)$ is exact.

Suppose, now, that there exist a neighbourhood $U \subset P$ of zero and $\lambda \in \Lambda$ such that
the penalty function $F(x, r, U)$ is exact, and the function $\mathscr{L}(\cdot, \lambda, r_0)$ is bounded below on
$A$ for some $r_0 \ge 0$. By the definition of exactness of a penalty function for any sufficiently large $r \ge 0$
one has that $F(x, r, U) \ge f^*$ for any $x \in A$. Applying \eqref{AugmentingFuncLowerEstimAtZero} one gets that
$$
  \Phi(x, p) - \langle \lambda, p \rangle + r \sigma(p) \ge
  \Phi(x, p) + \left( r - \frac{d(\lambda)}{\sigma_0} \right) \sigma(p) \quad \forall (x, p) \in X \times P,
$$
which implies that for all $(x, p) \in A \times U$ and for any $r$ large enough the following inequalities hold true
$$
  \Phi(x, p) - \langle \lambda, p \rangle + r \sigma(p) \ge 
  F\left(x, r - \frac{d(\lambda)}{\sigma_0}, U \right) \ge f^*.
$$
Taking the infimum over all $x \in A$ one obtains that
$$
  v(p) - \langle \lambda, p \rangle + r \sigma(p) \ge f^* = v(0) \quad \forall p \in U.
$$
Hence and from Proposition~\ref{Prp_AugmLagrMult_NeighbZero} it follows that $\lambda$ is an augmented Lagrange
multiplier of the problem $(\mathcal{P})$.	 
\end{proof}

\begin{remark}
Consider the problem $(\mathcal{M})$ from Example~\ref{Exmpl_AssociatedPenFunc}. Let $P$ be a normed space,
$\sigma(\cdot) = \| \cdot \|^{\gamma}$ with $\gamma \in (0, 1)$, and let $\Phi$ be the standard dualizing
parameterization for this problem. In this case we call $\mathscr{L}(x, \lambda, r)$ \textit{the lower order Lagrangian}
(cf.~\cite{WuBaiYang,BaiWuZhu}).

From the proposition above it follows that an augmented Lagrange multiplier of the problem ($\mathcal{P}$) exists in
the case when $\mathscr{L}$ is the lower order Lagrangian if and only if there exist $\tau > 0$ and 
$\lambda \in \Lambda$ such that the lower order penalty function
\begin{equation} \label{LowerOrderPenFunc}
  F_{\gamma}(x, r) = \begin{cases}
    f_0(x) + r d(0, G(x))^{\gamma}, & \text{if } d(0, G(x)) < \tau, \\
    + \infty, & \text{otherwise}
  \end{cases}
\end{equation}
is exact, and the function $\mathscr{L}(\cdot, \lambda, r)$ is bounded below on $A$ for some $r \ge 0$. Moreover,
if $\mathcal{A}(\mathcal{P}) \ne \emptyset$, then any multiplier $\lambda \in \Lambda$ such that 
$\mathscr{L}(\cdot, \lambda, r)$ is bounded below on $A$ for some $r \ge 0$ is an augmented Lagrange multiplier of
the problem $(\mathcal{P})$.

It should be noted that from \cite{Dolgopolik}, Theorems~3.22 and 3.23 it follows that the penalty function
(\ref{LowerOrderPenFunc}) is exact iff 
$$
  \liminf_{p \to 0} \frac{v(p) - v(0)}{\| p \|^{\gamma}} > - \infty,
$$
i.e. iff the optimal value function $v$ is lower order calm at the origin (see also~\cite{BaiWuZhu}). One can also
verify that the penalty function (\ref{LowerOrderPenFunc}) is exact iff there exists $a > 0$ such that the penalty
function
$$
  \widehat{F}_{\gamma}(x, r) = \begin{cases}
    f_0(x) + r \dfrac{d(0, G(x))^{\gamma}}{a - d(0, G(x))^{\gamma}}, & \text{if } d(0, G(x))^{\gamma} < a, \\
    + \infty, & \text{otherwise}.
  \end{cases}
$$
is exact. Note that the penalty function $\widehat{F}_{\gamma}(x, r)$ is more suitable for numerical optimization
than the penalty function \eqref{LowerOrderPenFunc}.
\end{remark}

If the augmenting function $\sigma$ grows near the origin slower than then the norm $\| p \|$, then the exactness of a
certain penalty function is necessary (but not sufficient) for the existence of an augmented Lagrange multiplier.

\begin{proposition}
Let $P$ be a normed space, and assume that for any $\lambda \in \Lambda$ there exists $d(\lambda) \ge 0$ such that
$|\langle \lambda, p \rangle| \le d(\lambda) \| p \|$ for all $p \in P$. Suppose also that 
$\limsup_{p \to 0} \sigma(p)/ \| p \| < + \infty$. Then for the existence of an augmented Lagrange multiplier of
the problem $(\mathcal{P})$ it is necessary that there exists a neighbourhood $U \subset P$ of zero such that the
penalty function $F_1(x, r, U) := \inf_{p \in U} ( \Phi(x, p) + r \| p \|)$ is exact.
\end{proposition}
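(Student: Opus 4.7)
The plan is to start with $\lambda \in \mathcal{A}(\mathcal{P})$ and invoke Proposition~\ref{Prp_ExactPenaltyRepr} to secure $\mathscr{L}(x,\lambda,r) \ge f^*$ for all $x \in A$ and all $r \ge r(\lambda)$. The goal is then to extract a neighbourhood $U$ of zero and a sufficiently large $r'$ for which $F_1(x,r',U) \ge f^*$ on $A$. From the hypothesis $\limsup_{p \to 0} \sigma(p)/\|p\| < +\infty$, one immediately obtains an open neighbourhood $U \subset P$ of zero and a constant $M > 0$ such that $\sigma(p) \le M\|p\|$ for every $p \in U$ (the case $p = 0$ being trivial).

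The core step is a pointwise comparison on $U$. Combining $\langle \lambda, p\rangle \ge -d(\lambda)\|p\|$ with $r\sigma(p) \le rM\|p\|$ for $p \in U$ and $r \ge 0$ yields, for every $x \in X$, $p \in U$, and $r' \ge rM + d(\lambda)$,
$$
  \Phi(x,p) + r'\|p\| \ge \Phi(x,p) - \langle \lambda, p\rangle + r\sigma(p).
$$
Taking the infimum over $p \in U$ on the left, and using that the infimum of the right-hand side over $U$ is at least the infimum over all of $P$ (which equals $\mathscr{L}(x,\lambda,r)$), gives $F_1(x, r', U) \ge \mathscr{L}(x,\lambda,r)$. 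Fixing any $r \ge r(\lambda)$ and choosing $r' = rM + d(\lambda)$ then yields $F_1(x, r', U) \ge f^*$ for all $x \in A$, which is exactness of $F_1(\cdot, \cdot, U)$.

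The main (modest) difficulty is choosing the direction of the estimates correctly: the $\limsup$ bound controls $\sigma$ from above near zero, so the natural direction is to dominate the augmented-Lagrangian kernel by the linear one $\Phi(x,p) + r'\|p\|$ on $U$, rather than the reverse used in the preceding proposition whose $\liminf$ hypothesis gave a lower bound on $\sigma$. Note that neither the valley-at-zero property nor any lower boundedness assumption on $\mathscr{L}(\cdot,\lambda,r)$ needs to enter here, since the statement is a pure necessary condition; accordingly the argument should be noticeably shorter than the proofs of the two preceding propositions.
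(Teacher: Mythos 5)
Your proof is correct and follows essentially the same route as the paper: extract $U$ and a constant $M>0$ with $\sigma(p)\le M\|p\|$ on $U$ from the limit superior hypothesis, dominate the augmented-Lagrangian kernel by $\Phi(x,p)+(d(\lambda)+rM)\|p\|$ on $U$, take the infimum over $p\in U$, and conclude via Proposition~\ref{Prp_ExactPenaltyRepr}. Your closing observation that neither the valley-at-zero property nor lower boundedness of $\mathscr{L}(\cdot,\lambda,r)$ is needed here is also consistent with the paper's argument.
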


\begin{proof}
From the definition of limit superior it follows that there exist a neighbourhood $U \subset P$ of zero and 
$\sigma_0 > 0$ such that $\sigma(p) \le \sigma_0 \| p \|$ for all $p \in U$. Hence for any $\lambda \in \Lambda$ and 
$r \ge 0$ one has
$$
  \Phi(x, p) - \langle \lambda, p \rangle + r \sigma(p) \le
  \Phi(x, p) + ( d(\lambda) + r \sigma_0 ) \| p \| \quad \forall (x, p) \in X \times P.
$$
Taking the infimum over all $p \in U$ one obtains that
$$
  F_1(x, d(\lambda) + r \sigma_0, U) \ge 
  \inf_{p \in U} \Big( \Phi(x, p) - \langle \lambda, p \rangle + r \sigma(p) \Big) \ge
  \mathscr{L}(x, \lambda, r) \quad \forall x \in X.
$$
Thus, if $\lambda$ is an augmented Lagrange multiplier of $(\mathcal{P})$ and $r \ge r(\lambda)$, then by
Proposition~\ref{Prp_ExactPenaltyRepr} one has $F_1(x, d(\lambda) + r \sigma_0, U) \ge f^*$, i.e. $F(x, r, U)$ is exact.
 
\end{proof}

\begin{remark}
Consider the problem $(\mathcal{M})$ from Example~\ref{Exmpl_AssociatedPenFunc}. Let $P$ be a normed space,
$\sigma(\cdot) = \| \cdot \|^{\beta}$ with $\beta > 1$, and let $\Phi$ be the standard dualizing
parameterization for this problem. From the proposition above it follows that, in this case, for the existence of an
augmented Lagrange multiplier of the problem $(\mathcal{P})$ it is necessary that there exists $\tau > 0$ such that the
penalty function \eqref{LowerOrderPenFunc} with $\gamma = 1$ is exact. In particular, from \cite{Dolgopolik}, Theorem
3.22 it follows that for the existence of an augmented Lagrange multiplier of the problem $(\mathcal{P})$ it is
necessary that the optimal value function $v$ is calm from below at the origin.
\end{remark}

Finally, note that if the function $\sigma$ grows near the origin ``too slow'' (namely, slower than $\| p \|^2$),
then, in the general case, augmented Lagrange multipliers of the problem $(\mathcal{P})$ do not exist.

\begin{example}
Let $X$ and $P$ be normed spaces, $\Lambda$ be the topological dual of $P$, and $\langle \cdot, \cdot \rangle$ be the
standard coupling function, i.e. $\langle \lambda, p \rangle = \lambda(p)$. Consider the following optimization
problem
\begin{equation} \label{MathProg_BanachSpace}
  \min f_0(x) \quad \text{subject to} \quad h(x) = 0,
\end{equation}
where $f_0 \colon X \to \mathbb{R}$ and $h \colon X \to P$ are given functions. Let $\Phi(x, p)$ be the standard
dualizing parameterization, and $\sigma(p) = \| p \|^{2 + \varepsilon}$, where $\varepsilon > 0$. Then
$$
  \mathscr{L}(x, \lambda, r) = f_0(x) + \langle \lambda, h(x) \rangle + r \| h(x) \|^{2 + \varepsilon}.
$$
Let $x^* \in X$ be a globally optimal solution of the problem \eqref{MathProg_BanachSpace}. Suppose that the functions 
$f$ and $h$ are continuously Fr\'echet differentiable at $x^*$, and there exist the second Fr\'echet derivatives of
these functions at the point $x^*$. Suppose, finally, that there exists an augmented Lagrange multiplier 
$\lambda_0 \in \Lambda$ of the problem $(\mathcal{P})$. Then by Corollary~\ref{Crlr_ExactPenRepresentation} the point
$x^*$ is a global minimizer of $\mathscr{L}(\cdot, \lambda_0, r)$ for any $r \ge r(\lambda_0)$.

From the fact that $h$ is continuously Fr\'echet differentiable at $x^*$ it follows that $h$ is Lipschitz
continuous near $x^*$. Note that $h(x^*) = 0$ due to the fact that $x^*$ is an optimal solution of 
the problem \eqref{MathProg_BanachSpace}. Therefore there exists $L > 0$ such that for any $x$ in a neighbourhood of
$x^*$ one has $\| h(x) \| \le L \| x \|$. Hence 
$\omega(x) := \| h(x) \|^{2 + \varepsilon} \le L^{2 + \varepsilon} \| x \|^{2 + \varepsilon}$ for any $x$
sufficiently close to $x^*$. Consequently, the function $\omega$ is twice Fr\'echet differentiable at $x^*$,
$\omega'(x^*) = 0$ and $\omega''(x^*) = 0$. Therefore the augmented Lagrangian $\mathscr{L}(\cdot, \lambda_0, r)$ is
twice Fr\'echet differentiable at $x^*$. By the necessary condition for a minimum one has
$$
  D^2_{xx} \mathscr{L}(x^*, \lambda_0, r) (y, y) \ge 0 \quad \forall y \in X \quad \forall r \ge r(\lambda_0).
$$
Taking into account the fact that $\omega''(x^*) = 0$ one obtains that
$$
  D^2_{xx} \mathscr{L}(x^*, \lambda_0, r) (y, y) = D^2_{xx} L(x^*, \lambda_0) (y, y) \ge 0 \quad \forall y \in X,
$$
where $L(x, \lambda_0) = f_0(x) + \langle \lambda, h(x) \rangle$ is the standard Lagrangian for the problem
\eqref{MathProg_BanachSpace}. However, usually, there is no such $\lambda \in \Lambda$ that
$D^2_{xx} L(x^*, \lambda) (y, y) \ge 0$ for all $y \in X$, unless $x^*$ is an unconstrained minimum of $f_0$. 

For instance, let the problem \eqref{MathProg_BanachSpace} have the form
\begin{equation} \label{SimpleProbl_SaddlePoint}
  \min x_1^2 - x_2 + 2 \cos x_2 \quad \text{subject to} \quad x_2 = 0.
\end{equation}
Then $L(x, \lambda) = x_1^2 - x_2 + 2 \cos x_2 + \lambda x_2$, $x^* = (0, 0)$, and for any $\lambda \in \mathbb{R}$ one
has
$$
  D^2_{xx} L(x^*, \lambda) (y, y) = 2 y_1^2 - 2 y_2^2 < 0 \quad \forall y \in \mathbb{R}^2 \colon |y_2| > |y_1|.
$$
Thus, in the general case, an augmented Lagrange multiplier of the problem $(\mathcal{P})$ does not exist when 
$\sigma(p) = \| p \|^{2 + \varepsilon}$ with $\varepsilon > 0$. However, note that $\lambda^* = 1$ is
an augmented Lagrange multiplier of the problem \eqref{SimpleProbl_SaddlePoint}, and $r(\lambda^*) = 4$ in the case when
$\mathscr{L}(x, \lambda, r)$ is the proximal Lagrangian, i.e. when $\sigma(p) = \| p \|^2 / 2$.
\end{example}

\section{Localization principle}
\label{Sect_LocalizationPrinciple}

In this section, we describe a general method for proving the existence of augmented Lagrange multipliers in the finite
dimensional case called \textit{the localization principle}. Clearly, the existence of an augmented Lagrange multiplier
is a \textit{global} property of the augmented Lagrangian function. However, the localization principle allows one to
study \textit{local} behaviour of an augmented Lagrangian near globally optimal solution of the problem $(\mathcal{P})$
in order to prove the existence of an augmented Lagrange multiplier. In turn, local analysis of an augmented Lagrangian
is usually performed with the use of sufficient optimality conditions (see Section~\ref{Sect_Applications} below). Thus,
the localization principle reduces the problem of existence of augmented Lagrange multipliers to the study of
optimality conditions. 

Before we turn to the localization principle, we need to obtain an auxiliary result on minimizing sequences constructed
with the use of an augmented Lagrangian.

\begin{proposition} \label{Prp_MinSeqAugmLagr}
Let $\{ r_n \} \subset (0, + \infty)$ be an increasing unbounded sequence, and let a sequence 
$\{ \varepsilon_n \} \subset (0, + \infty)$ be such that $\varepsilon_n \to 0$ as $n \to \infty$. 
Let also a multiplier $\lambda \in \Lambda$ be fixed. Suppose that the following assumptions are satisfied:
\begin{enumerate}
\item{$A$ is closed;}

\item{$\Phi$ is l.s.c. on $A \times \{ 0 \}$;}

\item{the function $\langle \lambda, \cdot \rangle$ is continuous at the origin;}

\item{$\sigma$ has a valley at zero;}

\item{the function $\mathscr{L}(\cdot, \lambda, r_1)$ is bounded below on $A$.}
\end{enumerate}
Then any cluster point of a sequence $\{ x_n \} \subset A$ such that
$$
  \mathscr{L}(x_n, \lambda, r_n) \le \inf_{x \in A} \mathscr{L}(x, \lambda, r_n) + \varepsilon_n 
  \quad \forall n \in \mathbb{N}.
$$
is a globally optimal solution of the problem $(\mathcal{P})$.
\end{proposition}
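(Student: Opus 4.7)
The plan is to fix a cluster point $x^*$ of $\{x_n\}$, pass to a subsequence $x_{n_k}\to x^*$ (with $x^*\in A$ since $A$ is closed), and show $f(x^*)=f^*$. The idea is to convert the near-minimization assumption for $\mathscr{L}$ into the existence of $\varepsilon$-minimizers $p_n$ in the inner $p$-infimum, show $p_n\to 0$, and then pass to the limit via lower semicontinuity of $\Phi$ at $(x^*,0)$.

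First, observe that taking $p=0$ in the definition of $\mathscr{L}$ (using $\Phi(x,0)=f(x)$ and $\sigma(0)=0$) gives $\mathscr{L}(x,\lambda,r)\le f(x)$, hence $\inf_{x\in A}\mathscr{L}(x,\lambda,r_n)\le f^*$ and therefore $\mathscr{L}(x_n,\lambda,r_n)\le f^*+\varepsilon_n$. By the definition of infimum, choose $p_n\in P$ with
\[
  \Phi(x_n,p_n)-\langle\lambda,p_n\rangle+r_n\sigma(p_n)\le f^*+2\varepsilon_n.
\]

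The main obstacle, and the heart of the argument, is showing $p_n\to 0$. Let $c\in\mathbb{R}$ be a lower bound of $\mathscr{L}(\cdot,\lambda,r_1)$ on $A$. Since $\sigma\ge 0$, the expression $\Phi(x,p)-\langle\lambda,p\rangle+r\sigma(p)$ is nondecreasing in $r$, so
\[
  \Phi(x_n,p_n)-\langle\lambda,p_n\rangle+r_1\sigma(p_n)\ge \mathscr{L}(x_n,\lambda,r_1)\ge c.
\]
Subtracting this from the previous inequality yields $(r_n-r_1)\sigma(p_n)\le f^*+2\varepsilon_n-c$. Since $r_n\to+\infty$ and $\varepsilon_n\to 0$, this forces $\sigma(p_n)\to 0$. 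The valley-at-zero assumption then implies $p_n\to 0$: if $p_n$ failed to enter some neighbourhood $U$ of zero along a subsequence, then $\sigma(p_n)\ge\delta>0$ along that subsequence, a contradiction.

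Once $p_n\to 0$, continuity of $\langle\lambda,\cdot\rangle$ at the origin gives $\langle\lambda,p_n\rangle\to 0$, and rearranging the near-optimality inequality (and discarding the nonnegative term $r_n\sigma(p_n)$) gives
\[
  \Phi(x_n,p_n)\le f^*+2\varepsilon_n+\langle\lambda,p_n\rangle,
\]
hence $\limsup_n\Phi(x_n,p_n)\le f^*$. Finally, apply lower semicontinuity of $\Phi$ at $(x^*,0)\in A\times\{0\}$ along the subsequence $(x_{n_k},p_{n_k})\to(x^*,0)$:
\[
  f(x^*)=\Phi(x^*,0)\le\liminf_{k\to\infty}\Phi(x_{n_k},p_{n_k})\le f^*.
\]
Combined with $f(x^*)\ge f^*$ (because $x^*\in A$), this yields $f(x^*)=f^*$, so $x^*$ is a globally optimal solution of $(\mathcal{P})$.
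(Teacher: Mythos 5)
Your proof is correct and follows essentially the same route as the paper's: select approximate minimizers $p_n$ in the inner infimum, use the lower bound on $\mathscr{L}(\cdot,\lambda,r_1)$ to force $(r_n-r_1)\sigma(p_n)$ to stay bounded and hence $\sigma(p_n)\to 0$, invoke the valley-at-zero property to get $p_n\to 0$, and conclude via lower semicontinuity of $\Phi$ on $A\times\{0\}$. The only cosmetic difference is that you substitute the bound $\mathscr{L}(x_n,\lambda,r_n)\le f^*+\varepsilon_n$ before choosing $p_n$ rather than after, which changes nothing of substance.
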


\begin{proof}
By the definition of the augmented Lagrangian $\mathscr{L}(x, \lambda, r)$ for any $n \in \mathbb{N}$ there exists 
$p_n \in \mathbb{N}$ such that
$$
  \Phi(x_n, p_n) - \langle \lambda, p_n \rangle + r_n \sigma(p_n) \le \mathscr{L}(x_n, \lambda, r_n) + \varepsilon_n.
$$
Observe that $\inf_{x \in A} \mathscr{L}(x, \lambda, r) \le f^*$ for any $r \ge 0$ due to the fact that 
$\Phi(x, 0) = f(x)$ for all $x \in X$. Therefore for any $n \in \mathbb{N}$ one has
\begin{multline} \label{MinSeqLowerEstimate}
  \varepsilon_n + f^* \ge \mathscr{L}(x_n, \lambda, r_n) \ge 
  \Phi(x_n, p_n) - \langle \lambda, p_n \rangle + r_1 \sigma(p_n) + (r_n - r_1) \sigma(p_n) - \varepsilon_n \ge \\
  \ge \mathscr{L}(x_n, \lambda, r_1) + (r_n - r_1) \sigma(p_n) - \varepsilon_n \ge
  c + (r_n - r_1) \sigma(p_n) - \varepsilon_n,
\end{multline}
where $c = \inf_{x \in A} \mathscr{L}(x, \lambda, r_1) > - \infty$. Consequently, taking into account the fact that
$\varepsilon_n \to 0$ as $n \to \infty$, and $r_n$ is an increasing unbounded sequence one obtains that 
$\sigma(p_n) \to 0$ as $n \to \infty$. Hence $p_n \to 0$ as $n \to \infty$ due to the fact that
$\sigma$ has a valley at zero.

Let $x^*$ be a cluster point of the sequence $\{ x_n \}$. Note that $x^* \in A$ by virtue of the fact that $A$ is
closed. From \eqref{MinSeqLowerEstimate} it follows that
$$
  \Phi(x_n, p_n) - \langle \lambda, p_n \rangle + r_1 \sigma(p_n) \le f^* + 2 \varepsilon_n.
$$
Passing to the limit inferior as $n \to \infty$, and taking into account the assumptions on the functions $\Phi$ and
$\langle \lambda, \cdot \rangle$, and the fact that $\sigma(p_n) \to 0$ as $n \to \infty$ one obtains that
$f(x^*) = \Phi(x^*, 0) \le f^*$, which implies that $x^*$ is globally optimal solution of the problem $(\mathcal{P})$.
 
\end{proof}

\begin{remark} \label{Rmrk_MinNetAugmLagr}
Suppose that all assumptions of the proposition above are satisfied. Let also for any $r \ge r_1$ a point $x(r) \in A$
be such that
$$
  \mathscr{L}(x(r), \lambda, r) \le \inf_{x \in A} \mathscr{L}(x, \lambda, r) + \varepsilon(r),
$$
where $\varepsilon(r) > 0$ and $\varepsilon(r) \to 0$ as $r \to \infty$. Then arguing in the same way as in the proof
of the previous proposition one can verify that any cluster point of the net $\{ x(r) \}$, $r \in [r_1, + \infty)$, if
exists, is a globally optimal solution of the problem $(\mathcal{P})$.
\end{remark}

As it was mentioned above, the localization principle allows one to study local behaviour of the augmented
Lagrangian near globally optimal solutions of the problem $(\mathcal{P})$ in order to prove the existence of an
augmented
Lagrange multiplier. The following definition describes the desired local behaviour of an augmented Lagrangian.

\begin{definition}
Let $x^*$ be a locally optimal solution of the problem $(\mathcal{P})$. A multiplier $\lambda \in \Lambda$ is called a
(\textit{local}) \textit{augmented Lagrange multiplier} at the point $x^*$ if there exists $r_0 \ge 0$ such that for
any $r \ge r_0$ the point $x^*$ is a local minimizer of the function $\mathscr{L}(\cdot, \lambda, r)$ on the set $A$,
and $\mathscr{L}(x^*, \lambda, r) = f(x^*)$. The greatest lower bound of all such $r_0$ is denoted by
$r(x^*, \lambda)$ and is called \textit{the least exact penalty parameter} for the multiplier $\lambda$ at the point
$x^*$. The set of all local augmented Lagrange multipliers at the point $x^*$ is denoted by $\mathcal{A}(x^*)$.
\end{definition}

Let $\lambda^*$ be a local augmented Lagrange multiplier at a locally optimal solution $x^*$ of the problem
$(\mathcal{P})$. From the fact that $\Phi(x^*, 0) = f(x^*)$ it follows that for any $\lambda \in \Lambda$ and $r \ge 0$
one has $\mathscr{L}(x^*, \lambda, r) \le f(x^*)$. Hence and from the definition above it follows that there exist a
neighbourhood $U$ of $x^*$ and $r_0 \ge 0$ such that
$$
  \sup_{\lambda \in \Lambda} \mathscr{L}(x^*, \lambda, r) \le \mathscr{L}(x^*, \lambda^*, r) \le
  \inf_{x \in U \cap A} \mathscr{L}(x, \lambda^*, r) \quad \forall r \ge r_0,
$$
i.e. $(x^*, \lambda^*)$ is a local saddle point of the augmented Lagrangian $\mathscr{L}(x, \lambda, r)$. On the other
hand, if $(x^*, \lambda^*)$ is a local saddle point of $\mathscr{L}(x, \lambda, r)$ such that 
$\mathscr{L}(x^*, \lambda^*, r) = f(x^*)$ for some $r \ge 0$, then, as it is easy to see, the multiplier $\lambda^*$ is
a local augmented Lagrange multiplier at $x^*$. Thus, there is a close connection between local augmented Lagrange
multipliers and local saddle points of the augmented Lagrangian $\mathscr{L}(x, \lambda, c)$ 
(cf.~Proposition~\ref{Prp_AugmLagr_GSP}).

Denote
$$
  \mathcal{A}_{loc}(\mathcal{P}) = \bigcap \mathcal{A}(x^*),
$$
where the intersection is taken over all globally optimal solutions $x^*$ of the problem $(\mathcal{P})$. From
Corollary~\ref{Crlr_ExactPenRepresentation} it follows that 
$\mathcal{A}(\mathcal{P}) \subset \mathcal{A}_{loc}(\mathcal{P})$, provided the problem $(\mathcal{P})$ has
optimal solutions. As the following example shows, the opposite inclusion does not hold true in the general case.

\begin{example}
Let $X = A = P = \Lambda = \mathbb{R}$ and $\langle \lambda, p \rangle = \lambda p$ for all 
$(\lambda, p) \in \Lambda \times P$. Let $f_0(x) = - x$ for all $x \le 0$, $f_0(x) = - x^2$ for all $x > 0$,
and let the problem under consideration have the form
\begin{equation} \label{ExampleUnboundedLagrangian}
  \min f_0(x) \quad \text{subject to} \quad x \le 0.
\end{equation}
Define $f(x) = f_0(x)$ for any $x \le 0$, and $f(x) = + \infty$, otherwise. Then 
the problem \eqref{ExampleUnboundedLagrangian} is equivalent to the problem $(\mathcal{P})$. Note that the point 
$x^* = 0$ is a unique globally optimal solution of the problem \eqref{ExampleUnboundedLagrangian} and $f^* = 0$.

Define $\sigma(p) = |p|$, and let $\Phi(x, p)$ be the standard dualizing parameterization for this problem, i.e.
$\Phi(x, p) = f_0(x)$, if $x + p \le 0$, and $\Phi(x, p) = + \infty$, otherwise. Then one can easily check that for all
$x, \lambda \in \mathbb{R}$ and $r \ge 0$ one has
\begin{equation} \label{Example_LocalNeEqGlobalALM}
  \mathscr{L}(x, \lambda, r) = \begin{cases}
    f_0(x) + (r + \lambda) \max\{ x, 0 \}, & \text{if~} r \ge |\lambda|, \\
    - \infty, & \text{if~} r < |\lambda|, \: \lambda < 0, \\
    f_0(x) + \lambda x + r |x|, & \text{if~} r < |\lambda|, \: \lambda > 0.
  \end{cases}
\end{equation}
Therefore $\inf_{x \in \mathbb{R}} \mathscr{L}(x, \lambda, r) = - \infty$ for any $\lambda \in \mathbb{R}$ and 
$r \ge 0$, which implies that $\mathcal{A}(\mathcal{P}) = \emptyset$. However, it is easy to verify that the point
$x^* = 0$ is a local minimizer of $\mathscr{L}(x, \lambda, r)$, provided $r > |\lambda|$. Thus,
$\mathcal{A}_{loc}(\mathcal{P}) = \mathbb{R}$ and $\mathcal{A}(\mathcal{P}) \ne \mathcal{A}_{loc}(\mathcal{P})$.
Note that the augmented Lagrangian \eqref{Example_LocalNeEqGlobalALM} is not bounded below on the set $A$ for any
$\lambda \in \Lambda$ and $r \ge 0$.
\end{example}

If the set $A$ is compact, then the localization principle guarantees that the equality 
$\mathcal{A}(\mathcal{P}) = \mathcal{A}_{loc}(\mathcal{P})$ holds true. Thus, according to the localization principle,
it is necessary and sufficient to prove that there exists a multiplier $\lambda \in \Lambda$ such that $\lambda$ is a
local augmented Lagrange multiplier at every globally optimal solution of the problem $(\mathcal{P})$ in order to prove
the existence of a (global) augmented Lagrange multiplier. In particular, if a globally optimal solution of
$(\mathcal{P})$ is unique, and the set $A$ is compact, then a local augmented Lagrange multiplier at this optimal
solution exists if and only if a global augmented Lagrange multiplier exists.

Recall that $X$ is a topological space.

\begin{theorem}[Localization principle] \label{Thrm_LocPrinciple_Compact}
Let $A$ be compact, $\Phi$ be l.s.c. on the set $A \times \{ 0 \}$, and the function 
$\langle \lambda, \cdot \rangle$ be continuous at the origin for any $\lambda \in \Lambda$. Suppose also that $\sigma$
has a valley at zero, and the function $\mathscr{L}(\cdot, \lambda, r)$ is l.s.c. on $A$ for any 
$\lambda \in \mathcal{A}_{loc}(\mathcal{P})$ and for any $r \ge 0$ large enough. 
Then $\mathcal{A}(\mathcal{P}) = \mathcal{A}_{loc}(\mathcal{P})$.
\end{theorem}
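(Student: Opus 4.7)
The plan is to establish the nontrivial inclusion $\mathcal{A}_{loc}(\mathcal{P}) \subseteq \mathcal{A}(\mathcal{P})$; the reverse inclusion is already recorded from Corollary~\ref{Crlr_ExactPenRepresentation} (note that $A$ compact together with the l.s.c.\ assumption on $\Phi$ at $A \times \{0\}$ guarantees that $(\mathcal{P})$ has a global minimizer, so that inclusion applies). I will argue by contradiction: fix $\lambda \in \mathcal{A}_{loc}(\mathcal{P})$ and suppose $\lambda \notin \mathcal{A}(\mathcal{P})$. By Proposition~\ref{Prp_ExactPenaltyRepr}, together with the automatic bound $\inf_{x \in A} \mathscr{L}(x,\lambda,r) \le f^*$, the contradiction hypothesis becomes the strict inequality $\inf_{x \in A} \mathscr{L}(x,\lambda,r) < f^*$ for every $r \ge 0$.

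Next, I would pick $r_1 \ge 0$ large enough that $\mathscr{L}(\cdot,\lambda,r)$ is l.s.c.\ on $A$ for all $r \ge r_1$. On the compact set $A$ each such function attains its infimum at some $x(r) \in A$ with $\mathscr{L}(x(r),\lambda,r) < f^*$, and in particular $\mathscr{L}(\cdot,\lambda,r_1)$ is bounded below on $A$. All the hypotheses of Proposition~\ref{Prp_MinSeqAugmLagr} are then in force, so its net-version Remark~\ref{Rmrk_MinNetAugmLagr} applies to $\{x(r)\}_{r \ge r_1}$ (with $\varepsilon(r) \equiv 0$). Compactness of $A$ produces a cluster point $x^* \in A$ of this net, and Remark~\ref{Rmrk_MinNetAugmLagr} identifies $x^*$ as a globally optimal solution of $(\mathcal{P})$.

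To extract the contradiction, I would exploit $\lambda \in \mathcal{A}(x^*)$: there exist $r_0 \ge 0$ and a neighbourhood $U$ of $x^*$ with $\mathscr{L}(x,\lambda,r_0) \ge \mathscr{L}(x^*,\lambda,r_0) = f(x^*) = f^*$ for all $x \in U \cap A$. Monotonicity of $r \mapsto \mathscr{L}(x,\lambda,r)$ (immediate from $\sigma \ge 0$ in the defining infimum) propagates this inequality to every $r \ge r_0$ with the same neighbourhood $U$, while $\mathscr{L}(x^*,\lambda,r) = f^*$ persists for all $r$. Because $x^*$ is a cluster point of $\{x(r)\}$, some $r \ge \max\{r_0,r_1\}$ satisfies $x(r) \in U$, giving $\mathscr{L}(x(r),\lambda,r) \ge f^*$ and contradicting the strict inequality from the previous paragraph.

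The main obstacle I anticipate is keeping the compactness step clean in a general topological space, where $A$ compact does not imply sequential compactness; this is why I would work with the net formulation in Remark~\ref{Rmrk_MinNetAugmLagr} rather than with sequences. A second subtlety is that the definition of local augmented Lagrange multiplier allows the neighbourhood $U$ to depend on $r$, so the argument needs a single $U$ good for all sufficiently large $r$, but the monotonicity of $\mathscr{L}(x,\lambda,\cdot)$ handles this for free.
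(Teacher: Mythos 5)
Your proposal is correct and follows essentially the same route as the paper's proof: compactness plus lower semicontinuity to obtain exact minimizers $x(r)$, a cluster point of the net $\{x(r)\}$ identified as a global solution via Remark~\ref{Rmrk_MinNetAugmLagr}, and then the local multiplier property combined with monotonicity of $\mathscr{L}(x,\lambda,\cdot)$ and Proposition~\ref{Prp_ExactPenaltyRepr} to conclude. The only difference is that you frame the final step as a contradiction while the paper argues directly, which is purely cosmetic (and your parenthetical $\varepsilon(r)\equiv 0$ should formally be, say, $\varepsilon(r)=1/r$ to match the strict positivity required in the remark).
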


\begin{proof}
Note that from the facts that $\Phi(x, 0) \equiv f(x)$, $\Phi$ is l.s.c. on $A \times \{ 0 \}$, and $A$ is compact it
follows that there exists a globally optimal solution of the problem $(\mathcal{P})$.

Fix an arbitrary $\lambda \in \mathcal{A}_{loc}(\mathcal{P})$. By the assumption of the theorem, there exists 
$r_0 \ge 0$ such that the function $\mathscr{L}(\cdot, \lambda, r)$ is l.s.c. on $A$ for any $r \ge r_0$. Hence and from
the fact that $A$ is compact it follows that for any $r \ge r_0$ there exists 
$x(r) \in \argmin_{x \in A} \mathscr{L}(x, \lambda, r)$. Applying the compactness of the set $A$ again, one obtains that
there exists a cluster point $x^* \in A$ of the net $\{ x(r) \}$, $r \in [r_0, + \infty)$ (see, e.g., \cite{Kelley},
Theorem~5.2). By Remark~\ref{Rmrk_MinNetAugmLagr} the point $x^*$ is a globally optimal solution of 
the problem $(\mathcal{P})$.

By the definition of $\mathcal{A}_{loc}(\mathcal{P})$ for any $\tau > r(x^*, \lambda)$ there exists a neighbourhood $U$
of $x^*$ such that
$$
  \mathscr{L}(x, \lambda, \tau) \ge \mathscr{L}(x^*, \lambda, \tau) = f(x^*) = f^* \quad \forall x \in U \cap A.
$$
Consequently, taking into account the fact the function $\mathscr{L}(x, \lambda, r)$ is non-decreasing in $r$ one gets
that $\mathscr{L}(x, \lambda, r) \ge f^*$ for all $x \in U \cap A$ and $r \ge \tau$. From the fact that $x^*$ is a
cluster point of the net $\{ x(r) \}$, $r \in [r_0, + \infty)$ one obtains that there exists $r \ge \tau$ such that
$x(r) \in U \cap A$. Therefore $\mathscr{L}(x(r), \lambda, r) \ge f^*$, which implies that $\lambda$ is an augmented
Lagrange multiplier of the problem $(\mathcal{P})$ by virtue of Proposition~\ref{Prp_ExactPenaltyRepr}, and the fact
that $x(r)$ is a globally optimal solution of $\mathscr{L}(\cdot, \lambda, r)$ on $A$.	 
\end{proof}

In order to extend Theorem~\ref{Thrm_LocPrinciple_Compact} to the case when the set $A$ is not compact, we need to
introduce the following concept of a \textit{non-degenerate} augmented Lagrangian (cf. the definition of non-degenerate
penalty function in \cite{Dolgopolik}).

\begin{definition}
Let $X$ be a normed space, and let $\lambda^* \in \Lambda$ be fixed. The augmented Lagrangian 
$\mathscr{L}(x, \lambda, r)$ is said to be \textit{non-degenerate} for $\lambda = \lambda^*$ if there exist $r_0 \ge 0$
and $K > 0$ such that for any $r \ge r_0$ the function $\mathscr{L}(\cdot, \lambda^*, r)$ attains a global minimum on
the set $A$, and there exists $x_r \in \argmin_{x \in A} \mathscr{L}(x, \lambda^*, r)$ such that $\| x_r \| \le K$.
\end{definition}

Roughly speaking, the non-degeneracy condition does not allow global minimizers of $\mathscr{L}(\cdot, \lambda, r)$ on
$A$ to escape to infinity as $r \to \infty$. Also, the non-degeneracy of the augmented Lagrangian $\mathscr{L}$ plays a
crucial role for the validity of the localization principle in the finite dimensional case.

\begin{theorem}[Localization principle] \label{Thrm_LocPrinciple_Minimizers}
Suppose that the following assumptions are satisfied:
\begin{enumerate}
\item{$X$ is a finite dimensional normed space;}

\item{$A$ is closed;}

\item{there exists a globally optimal solution of the problem $(\mathcal{P})$;}

\item{$\Phi$ is l.s.c. on the set $A \times \{ 0 \}$;}

\item{the function $\langle \lambda, \cdot \rangle$ is continuous at the origin for any $\lambda \in \Lambda$;}

\item{$\sigma$ has a valley at zero.}
\end{enumerate}
Then $\mathcal{A}(\mathcal{P})$ coincides with the set of all those $\lambda \in \mathcal{A}_{loc}(\mathcal{P})$ for
which the augmented Lagrangian $\mathscr{L}(x, \lambda, r)$ is non-degenerate.
\end{theorem}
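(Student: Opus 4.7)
The plan is to establish the two set-theoretic inclusions separately.

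For the forward inclusion, suppose $\lambda \in \mathcal{A}(\mathcal{P})$. By assumption~(3) the problem admits a globally optimal solution $x^*$, and Corollary~\ref{Crlr_ExactPenRepresentation} tells us that every such $x^*$ lies in $\argmin_{x \in A} \mathscr{L}(x, \lambda, r)$ for all $r \ge r(\lambda)$. Combining the trivial bound $\mathscr{L}(x^*, \lambda, r) \le \Phi(x^*, 0) = f(x^*)$ with $\inf_{x \in A} \mathscr{L}(\cdot, \lambda, r) = f^*$ (Proposition~\ref{Prp_ExactPenaltyRepr}) forces $\mathscr{L}(x^*, \lambda, r) = f(x^*)$ for every $r \ge r(\lambda)$ and every globally optimal $x^*$. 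Hence $\lambda \in \mathcal{A}(x^*)$ for each such $x^*$, so $\lambda \in \mathcal{A}_{loc}(\mathcal{P})$; and taking the constant selection $x_r \equiv x^*$ for all $r \ge r(\lambda)$ witnesses non-degeneracy with $K = \|x^*\|$.

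For the reverse inclusion, assume $\lambda \in \mathcal{A}_{loc}(\mathcal{P})$ together with non-degeneracy, which supplies $r_0 \ge 0$, $K > 0$, and minimizers $x_r \in \argmin_{x \in A} \mathscr{L}(x, \lambda, r)$ with $\|x_r\| \le K$ for every $r \ge r_0$. Fix an increasing unbounded sequence $\{r_n\}$ with $r_1 \ge r_0$. Since $x_{r_1}$ realizes the infimum, $\mathscr{L}(\cdot, \lambda, r_1)$ is bounded below, and the choice $\varepsilon_n = 0$ trivially satisfies the near-minimizing condition of Proposition~\ref{Prp_MinSeqAugmLagr}; all remaining hypotheses of that proposition hold by assumptions~(2), (4), (5), (6). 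Consequently, every cluster point of $\{x_{r_n}\}$ is a globally optimal solution of $(\mathcal{P})$. Finite dimensionality of $X$ combined with $\|x_{r_n}\| \le K$ yields, via Bolzano--Weierstrass and closedness of $A$, a subsequence $x_{r_{n_k}} \to x^* \in A$, and $x^*$ is globally optimal.

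It remains to convert the local property at $x^*$ into global exactness. Since $\lambda \in \mathcal{A}(x^*)$, there exist $\tau \ge r_0$ and a neighborhood $U$ of $x^*$ such that $\mathscr{L}(x, \lambda, \tau) \ge \mathscr{L}(x^*, \lambda, \tau) = f(x^*) = f^*$ for all $x \in U \cap A$; monotonicity of $\mathscr{L}$ in $r$ (since $\sigma \ge 0$) extends this bound to every $r \ge \tau$. For sufficiently large $k$ we have $x_{r_{n_k}} \in U$ and $r_{n_k} \ge \tau$, so $\inf_{x \in A} \mathscr{L}(x, \lambda, r_{n_k}) = \mathscr{L}(x_{r_{n_k}}, \lambda, r_{n_k}) \ge f^*$. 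Together with the automatic reverse inequality this yields $\inf_{x \in A} \mathscr{L}(x, \lambda, r_{n_k}) = f^*$, and Proposition~\ref{Prp_ExactPenaltyRepr} concludes $\lambda \in \mathcal{A}(\mathcal{P})$.

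The main obstacle is the bridge between the \emph{local} exactness guaranteed at an unspecified global optimum and the \emph{global} property required for $\mathcal{A}(\mathcal{P})$; neither hypothesis alone suffices, as the example preceding the theorem shows. Non-degeneracy produces a bounded sequence of global minimizers, finite dimensionality upgrades boundedness to convergence, and Proposition~\ref{Prp_MinSeqAugmLagr} forces the limit to be globally optimal---precisely the point where local exactness is available and can be propagated back to the approximating minimizers.
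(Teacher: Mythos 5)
Your proof is correct and follows essentially the same route as the paper's: the forward inclusion via Corollary~\ref{Crlr_ExactPenRepresentation} with the constant selection $x_r \equiv x^*$, and the reverse inclusion by extracting a convergent subsequence of bounded minimizers, invoking Proposition~\ref{Prp_MinSeqAugmLagr} to identify the limit as a global optimum, and then propagating the local exactness at that limit back to the minimizers via monotonicity of $\mathscr{L}$ in $r$ and Proposition~\ref{Prp_ExactPenaltyRepr}. The only cosmetic difference is your ``$\varepsilon_n = 0$'' remark, which should read ``any $\varepsilon_n > 0$, e.g.\ $\varepsilon_n = 1/n$'' since Proposition~\ref{Prp_MinSeqAugmLagr} is stated for positive tolerances; exact minimizers satisfy the condition for every such choice.
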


\begin{proof}
Let $\lambda^*$ be an augmented Lagrange multiplier of $(\mathcal{P})$. Then 
$\lambda^* \in  \mathcal{A}_{loc}(\mathcal{P})$, and by Corollary~\ref{Crlr_ExactPenRepresentation} any globally optimal
solution of the problem $(\mathcal{P})$ is a global minimizer of $\mathscr{L}(\cdot, \lambda^*, r)$ on $A$ for any 
$r \ge r(\lambda^*)$. Therefore the augmented Lagrangian $\mathscr{L}(x, \lambda, r)$ is non-degenerate for 
$\lambda = \lambda^*$ with $r_0 = r(\lambda^*)$ and $K = \| x^* \|$, where $x^*$ is a globally optimal solution of the
problem $(\mathcal{P})$.

Suppose, now, that $\lambda^* \in \mathcal{A}_{loc}(\mathcal{P})$, and the augmented Lagrangian 
$\mathscr{L}(x, \lambda, r)$ is non-degenerate for $\lambda = \lambda^*$. Choose an increasing unbounded 
sequence $\{ r_n \} \subset [r_0, + \infty)$, where $r_0$ is from the definition of the non-degeneracy condition. Then
for any $n \in \mathbb{N}$ there exists $x_n \in \argmin_{x \in A} \mathscr{L}(x, \lambda^*, r_n)$ such that 
$\| x_n \| \le K$ for some $K \ge 0$. From the facts that $X$ is a finite dimensional normed space, the set $A$ is
closed, and the sequence $\{ x_n \}$ is bounded it follows that there exists a subsequence $\{ x_{n_k} \}$ converging to
a point $x^* \in A$. Moreover, by Proposition~\ref{Prp_MinSeqAugmLagr} the point $x^*$ is a globally optimal solution of
the problem $(\mathcal{P})$. 

Recall that $\lambda^* \in \mathcal{A}_{loc}(\mathcal{P})$. Therefore for any $\tau > r(x^*, \lambda^*)$ there exists a
neighbourhood $U$ of $x^*$ such that
$$
  \mathscr{L}(x, \lambda^*, \tau) \ge \mathscr{L}(x^*, \lambda^*, \tau) = f(x^*) = f^* 
  \quad \forall x \in U \cap A.
$$
Taking into account the fact that $\mathscr{L}(x, \lambda^*, r)$ is non-decreasing in $r$ one gets that
$\mathscr{L}(x, \lambda^*, r) \ge f^*$ for all $x \in U \cap A$ and $r \ge \tau$. From the facts that the subsequence 
$\{ x_{n_k} \} \subset A$ converges to $x^*$, and $\{ r_n \}$ is an increasing unbounded sequence it follows that there
exists $k \in \mathbb{N}$ such that $r_{n_k} \ge \tau$ and $x_{n_k} \in U \cap A$. Hence one has
$\mathscr{L}(x_{n_k}, \lambda^*, r_{n_k}) \ge f^*$. Consequently, applying Proposition~\ref{Prp_ExactPenaltyRepr}, and
the fact that $x_{n_k}$ is a global minimizer of $\mathscr{L}(\cdot, \lambda^*, r)$ on $A$ one obtains 
that $\lambda^* \in \mathcal{A}(\mathcal{P})$.	 
\end{proof}

Let us also give a different formulation of the localization principle in the finite dimensional case in which the
non-degeneracy condition is replaced by an assumption on a sublevel set of the augmented Lagrangian 
$\mathscr{L}(x, \lambda, r)$. 

For any $x \in X$ denote $\varphi(x) = \inf\{ \sigma(p) \mid p \in P \colon \Phi(x, p) < + \infty \}$.
The function $\varphi$ is called \textit{the penalty term} associated with the augmented Lagrangian 
$\mathscr{L}(x, \lambda, r)$. In the context of Example~\ref{Exmpl_AssociatedPenFunc}, one has
$\varphi(x) = \inf_{p \in (-G(x))} \sigma(p)$. In particular, in the case of the sharp Lagrangine one has 
$\varphi(x) = d(0, G(x))$, while in the case of the proximal Lagrangian one has $\varphi(x) = d(0, G(x))^2 / 2$.

\begin{theorem}[Localization principle] \label{Thrm_LocPrinciple_SublevelSet}
Let all assumptions of Theorem~\ref{Thrm_LocPrinciple_Minimizers} be satisfied. Suppose also that
the function $\mathscr{L}(\cdot, \lambda, r)$ is l.s.c. on $A$ for any $\lambda \in \mathcal{A}_{loc}(\mathcal{P})$ and
for any sufficiently large $r \ge 0$ (that might depend on $\lambda$). Then the set $\mathcal{A}(\mathcal{P})$ consists
of all those $\lambda \in \mathcal{A}_{loc}(\mathcal{P})$ for which there exist $r_0 \ge 0$ and $\delta > 0$ such that
the set
\begin{equation} \label{AugmLagrReducedSet}
  \Big\{ x \in A \Bigm| \mathscr{L}(x, \lambda, r_0) < f^*, \: \varphi(x) < \delta \Big\}
\end{equation}
is bounded or empty, and the function $\mathscr{L}(\cdot, \lambda, r_0)$ is bounded below on $A$.
\end{theorem}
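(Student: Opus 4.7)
The plan is to verify the stated equality by proving both inclusions, leveraging Propositions~\ref{Prp_ExactPenaltyRepr} and~\ref{Prp_MinSeqAugmLagr}. The direction $\mathcal{A}(\mathcal{P}) \subseteq \{\ldots\}$ is immediate: if $\lambda \in \mathcal{A}(\mathcal{P})$, then $\lambda \in \mathcal{A}_{loc}(\mathcal{P})$ by Corollary~\ref{Crlr_ExactPenRepresentation}, and Proposition~\ref{Prp_ExactPenaltyRepr} applied with $r_0 := r(\lambda)$ yields $\mathscr{L}(x, \lambda, r_0) \ge f^*$ for every $x \in A$, which makes the set \eqref{AugmLagrReducedSet} empty for every $\delta > 0$ and trivially gives a lower bound.

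For the reverse inclusion, the plan is to argue by contradiction. Assume $\lambda \in \mathcal{A}_{loc}(\mathcal{P})$ satisfies the hypotheses with parameters $r_0 \ge 0$, $\delta > 0$, and $c := \inf_{x \in A} \mathscr{L}(x, \lambda, r_0) > -\infty$, but suppose $\lambda \notin \mathcal{A}(\mathcal{P})$. Since the bound $\mathscr{L}(x, \lambda, r) \le \Phi(x, 0) = f(x)$ trivially gives $\inf_A \mathscr{L}(\cdot, \lambda, r) \le f^*$, Proposition~\ref{Prp_ExactPenaltyRepr} forces $\inf_A \mathscr{L}(\cdot, \lambda, r) < f^*$ for every $r \ge 0$. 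Fix an increasing unbounded sequence $\{ r_n \} \subset [r_0, + \infty)$, and for each $n$ pick $x_n \in A$ and $p_n \in P$ with
$$
  \Phi(x_n, p_n) - \langle \lambda, p_n \rangle + r_n \sigma(p_n) < f^*.
$$

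The key estimate is the split used in the proof of Proposition~\ref{Prp_MinSeqAugmLagr}: from $\Phi(x_n, p_n) - \langle \lambda, p_n \rangle + r_0 \sigma(p_n) \ge \mathscr{L}(x_n, \lambda, r_0) \ge c$ one derives $c + (r_n - r_0) \sigma(p_n) < f^*$, so $\sigma(p_n) \to 0$. The valley-at-zero property of $\sigma$ then forces $p_n \to 0$, and in particular $\varphi(x_n) \le \sigma(p_n) < \delta$ for every $n$ large. Monotonicity of $\mathscr{L}(x, \lambda, \cdot)$ in $r$ yields $\mathscr{L}(x_n, \lambda, r_0) \le \mathscr{L}(x_n, \lambda, r_n) < f^*$. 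Hence $\{ x_n \}$ eventually belongs to the set \eqref{AugmLagrReducedSet}, which by assumption is bounded, so $\{ x_n \}$ is bounded.

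The finite dimensionality of $X$ together with closedness of $A$ then furnishes a subsequence $x_{n_k} \to x^* \in A$. Rewriting the defining inequality as $\Phi(x_{n_k}, p_{n_k}) \le f^* + \langle \lambda, p_{n_k} \rangle - r_{n_k} \sigma(p_{n_k})$, passing to the limit inferior, and using lower semicontinuity of $\Phi$ at $(x^*, 0)$, continuity of $\langle \lambda, \cdot \rangle$ at the origin, and $r_{n_k} \sigma(p_{n_k}) \ge 0$, one obtains $f(x^*) = \Phi(x^*, 0) \le f^*$, so $x^*$ is a globally optimal solution. Since $\lambda \in \mathcal{A}(x^*)$, fix any $\tau > r(x^*, \lambda)$: there is a neighbourhood $U$ of $x^*$ with $\mathscr{L}(x, \lambda, \tau) \ge \mathscr{L}(x^*, \lambda, \tau) = f^*$ on $U \cap A$, and by monotonicity the same inequality persists for every $r \ge \tau$. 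For $k$ large enough, $x_{n_k} \in U$ and $r_{n_k} \ge \tau$, whence $\mathscr{L}(x_{n_k}, \lambda, r_{n_k}) \ge f^*$, contradicting the construction. The main obstacle is precisely the bridge from the \emph{fixed}-$r_0$ sublevel condition to the failure at \emph{arbitrarily large} $r_n$; the monotonicity-plus-valley-at-zero trick above is what traps the violating sequence inside the prescribed bounded set, after which the local augmented multiplier property at the limit point closes the loop.
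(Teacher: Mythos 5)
Your proof is correct, but it takes a genuinely different route in the nontrivial direction. The paper does not argue by contradiction: it shows that the sublevel-set hypothesis implies non-degeneracy and then invokes Theorem~\ref{Thrm_LocPrinciple_Minimizers}. Concretely, from $\mathscr{L}(x, \lambda, r) \ge \mathscr{L}(x, \lambda, r_0) + (r - r_0)\varphi(x)$ it deduces that for all large $r$ the strict sublevel set $\{ x \in A \mid \mathscr{L}(x, \lambda, r) < f^* \}$ is contained in the set \eqref{AugmLagrReducedSet}, hence bounded; the lower semicontinuity of $\mathscr{L}(\cdot, \lambda, r)$ together with finite dimensionality then yields attainment of a global minimum, and either the minimum value is already $\ge f^*$ or the minimizers are trapped in a ball, giving non-degeneracy. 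You instead bypass minimizers entirely: assuming $\lambda \notin \mathcal{A}(\mathcal{P})$, you extract points $x_n$ with $\mathscr{L}(x_n, \lambda, r_n) < f^*$, use the same monotonicity/valley-at-zero estimate (essentially inlining the proof of Proposition~\ref{Prp_MinSeqAugmLagr}, which you correctly do not cite directly since your $x_n$ need not be approximate minimizers) to force $\sigma(p_n) \to 0$ and trap $x_n$ in the bounded set \eqref{AugmLagrReducedSet}, pass to a convergent subsequence, identify the limit as a global solution, and contradict the local multiplier property there. What your approach buys is that it never uses the lower semicontinuity of $\mathscr{L}(\cdot, \lambda, r)$ on $A$ (that hypothesis is only needed in the paper's argument to secure attainment of global minima), so you in fact prove a marginally stronger statement; what the paper's approach buys is modularity, since it reduces everything to the already established Theorem~\ref{Thrm_LocPrinciple_Minimizers}. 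Both arguments are sound.
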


\begin{proof}
Clearly, if $\lambda \in \mathcal{A}(\mathcal{P})$, then $\lambda \in \mathcal{A}_{loc}(\mathcal{P})$, and, by
Proposition~\ref{Prp_ExactPenaltyRepr}, for any $r \ge r(\lambda)$ and $\delta > 0$ the set \eqref{AugmLagrReducedSet}
is empty, and $\mathscr{L}(x, \lambda, r) \ge f^*$ for all $x \in A$.

Suppose, now, that $\lambda \in \mathcal{A}_{loc}(\mathcal{P})$, and there exist $r_0 \ge 0$ and $\delta > 0$ such that
the set \eqref{AugmLagrReducedSet} is bounded or empty, and the function $\mathscr{L}(\cdot, \lambda, r_0)$ is bounded
below on $A$. Note that for any $x \in A$ and $r \ge r_0$ one has
$$
  \Phi(x, p) - \langle \lambda, p \rangle + r \sigma(p) \ge
  \Phi(x, p) - \langle \lambda, p \rangle + r_0 \sigma(p) + (r - r_0) \varphi(x) \quad \forall p \in P,
$$
which implies that $\mathscr{L}(x, \lambda, r) \ge \mathscr{L}(x, \lambda, r_0) + (r - r_0) \varphi(x)$ for 
all $x \in A$. Therefore for any $x \in A$ such that $\varphi(x) \ge \delta$ one has
$$
  \mathscr{L}(x, \lambda, r ) \ge c + (r - r_0) \delta \ge f^* \quad \forall r \ge \tau := r_0 + \frac{f^* - c}{\delta},
$$
where $c = \inf_{x \in A} \mathscr{L}(x, \lambda, r_0) > - \infty$. Hence for any $r \ge \tau$ one has
$$
  \Big\{ x \in A \Bigm| \mathscr{L}(x, \lambda, r) < f^* \Big\} \subset
  \Big\{ x \in A \Bigm| \mathscr{L}(x, \lambda, r_0) < f^*, \: \varphi(x) < \delta \Big\}.
$$
Therefore there exists $K \ge 0$ such that for any $r \ge \tau$ one has that $\| x \| \le K$ for all 
$x \in \{ y \in A \mid \mathscr{L}(y, \lambda, r) < f^* \}$. Consequently, applying the lower semicontinuity of the
function $\mathscr{L}(\cdot, \lambda, r)$ on the set $A$, and the fact that $X$ is a finite dimensional normed space,
one obtains that the function $\mathscr{L}(\cdot, \lambda, r)$ attains a global minimum on the set $A$ for any
sufficiently large $r \ge 0$. Furthermore, for any $r$ large enough either $\mathscr{L}(x, \lambda, r) \ge f^*$ for all
$x \in A$, which implies that $\lambda \in \mathcal{A}(\mathcal{P})$ due to Proposition~\ref{Prp_ExactPenaltyRepr}, or
any global minimizer $x^*$ of $\mathscr{L}(\cdot, \lambda, r)$ on $A$ satisfies the inequality $\| x^* \| \le K$. In the
latter case, one obtains that $\mathscr{L}(x, \lambda, r)$ is non-degenerate. Then applying
Theorem~\ref{Thrm_LocPrinciple_Minimizers} one gets the desired result.
\end{proof}

\begin{remark} \label{Rmrk_GeneralPrinciple}
Theorems~\ref{Thrm_LocPrinciple_Compact}--\ref{Thrm_LocPrinciple_SublevelSet} allow one to understand a general
principle behind many similar results on augmented Lagrangian functions (see~\cite{SunLiMckinnon},
Theorems~3.1--3.4; \cite{LiuYang}, Theorem~3.3; \cite{ZhouXiuWang}, Theorem~3.3; \cite{ZhouZhouYang2014}, Theorems~3.1
and 3.2; \cite{ZhouChen2015}, Theorem~3.1, etc.). Namely, a multiplier $\lambda^*$ is a global augmented Lagrange
multiplier if and only if $\lambda^*$ is a local augmented Lagrange multiplier at every global minimizer of the problem
$(\mathcal{P})$ and a certain compactness assumption holds true (note that this principle can also be formulated in
terms of global and local saddle points). We call this general result \textit{the localization principle}, since it
allows one to perform a local analysis of the augmented Lagrangian in order to obtain global existence results.
\end{remark}

Let us demonstrate that, without some additional assumptions (such as the compactness of the set $A$), the localization
principle is valid only in the finite dimensional case.

\begin{example}
Let $X = A = \ell_2$, $P = \Lambda = \mathbb{R}$ and $\langle \lambda, p \rangle \equiv \lambda p$. Define
$\omega_1(t) = \max\{ - t + 1, -1 \}$, and for any $n \ge 2$ set
$$
  \omega_n(t) = \begin{cases}
    - 3 / n, & \text{if } t \in (- \infty, - 1 - 2 / n), \\
    t + 1 + 1 / n - n (t + 1)^2 / 2, & \text{if } t \in [- 1 - 2 / n, - 1], \\
    t + 1 + 1 / n, & \text{if } t \in (-1, 0], \\
    1 + 1 / n, & \text{if } t \in (0, + \infty).
  \end{cases}
$$
Finally, for any $x = (x_1, x_2, \ldots) \in \ell_2$ define $f_0(x) = \inf_{n \in \mathbb{N}} \omega_n(x_n)$, and
$h(x) = \| x \| - 1$, where $\| \cdot \|$ is the standard norm in $\ell_2$. Note that the function $f_0$ (as well as the
function $h$) is globally Lipschitz continuous and bounded below on $\ell_2$ due to the facts that for any 
$n \in \mathbb{N}$ the function $\omega_n$ is globally Lipschitz continuous with a Lipschitz constant $L \le 3$, and 
$\omega_n(t) \ge - 3/2$ for all $t \in \mathbb{R}$.

Consider the following optimization problem
$$
  \min f_0(x) \quad \text{subject to} \quad h(x) = 0.
$$
One can easily verify that $x^* = (1, 0, 0, \ldots)$ is a unique globally optimal solution of this problem, and 
$f^* = f_0(x^*) = 0$. Let $\Phi(x, p)$ be the standard dualizing parameterization for the problem above, and let
$\sigma(p) = p^2 / 2$. Then $\mathscr{L}(x, \lambda, r) = f_0(x) + \lambda h(x) + r h(x)^2 / 2$. It is easy to see
that for any $\lambda \in \mathbb{R}$ and $r \ge 0$ the function $\mathscr{L}(\cdot, \lambda, r)$ is coercive, and
Lipschitz continuous on any bounded subset of $\ell_2$.

Let us check that $\lambda^* = 1$ is a local augmented Lagrange multiplier at $x^*$. Indeed, from the facts that 
$\omega_1(t) \le 0.5$ for any $t \in [0.5, 1.5]$ and $\omega_n(t) > 0.5$ for any $t \in [-0.5, 0.5]$ it follows that
$f_0(x) = \omega_1(x_1)$ for all $x \in B(x^*, 0.5)$, where $B(x^*, 0.5)$ is the ball with centre $x^*$ and radius
$0.5$. Hence for any $x \in B(x^*, 0.5)$ and $r \ge 0$ one has
\begin{multline} \label{FirstFuncMin}
  \mathscr{L}(x, 1, r) = \omega_1(x_1) + h(x) + \frac{r}{2} h(x)^2 =
  \max\{ -x_1 + 1, -1 \} + (\| x \| - 1) \\
  + \frac{r}{2} \big( \| x \| - 1 \big)^2 \ge
  \max\big\{ - x_1 + \| x \|, \| x \| - 2 \big\} \ge 0 = f_0(x^*).
\end{multline}
Consequently, $\lambda^* = 1$ is a local augmented Lagrange multiplier at $x^*$ and $r(x^*, \lambda^*) = 0$.
Furthermore, one can verify that $\lambda^* = 1$ is a \textit{unique} augmented Lagrange multiplier at $x^*$.

Let us find a global minimum of the function $\mathscr{L}(\cdot, 1, r)$ for any $r \ge 1$. Taking into account the
fact that $f_0(x) = \inf_{n \in \mathbb{N}} \omega_n(x_n)$ one obtains that it is sufficient to find a global minimizer
$x^{(n)}$ of the function $\theta_n(x) = \omega_n(x_n) + h(x) + r h(x)^2 / 2$, $n \ge 2$ (note that from
\eqref{FirstFuncMin} it follows that $x^{(1)} = x^*$), and, then, to minimize $\mathcal{L}(x^{(n)}, 1, r)$ with respect
to $n$. 

Fix arbitrary $n \ge 2$, $x \in \ell_2$ and $r \ge 1$. If $x_n \ge 0$, then
$$
  \theta_n(x) = 1 + \frac{1}{n} + \| x \| - 1 + \frac{r}{2}(\| x \| - 1)^2 \ge \frac{1}{n}.
$$
If $x_n \in [-1, 0]$, then
$$
  \theta_n(x) = x_n + 1 + \frac{1}{n} + \| x \| - 1 + \frac{r}{2}(\| x \| - 1)^2 \ge
  x_n + \| x \| + \frac{1}{n} \ge \frac{1}{n}.
$$
If $x_n \le - 1 - 2 / n$, then
\begin{multline*}
  \theta_n(x) = - \frac{3}{n} + \| x \| - 1 + \frac{r}{2}(\| x \| - 1)^2 \\
  \ge - \frac{3}{n} + \inf_{t \ge 1 + 2 / n} \Big( t - 1 + \frac{r}{2} (t - 1)^2 \Big) = 
  - \frac{1}{n} + \frac{2r}{n^2}.
\end{multline*}
Finally, if $x_n \in [-1 - 2/n, -1]$, then
\begin{multline*}
  \theta_n(x) = x_n + 1 + \frac{1}{n} - \frac{n}{2} (x_n + 1)^2 + \| x \| - 1 + \frac{r}{2}(\| x \| - 1)^2 \ge
  - \frac{n}{2} (x_n + 1)^2 \\
  + \frac{r}{2}(\| x \| - 1)^2 + \frac{1}{n} \ge
  - \frac{n}{2} (x_n + 1)^2 + \frac{r}{2}(|x_n| - 1)^2 + \frac{1}{n} \ge 
  \min\left\{ \frac{1}{n}, -\frac{1}{n} + \frac{2r}{n^2} \right\}.
\end{multline*}
Thus, for any $n \ge 2$ and $r \ge 1$ one has that $\inf_{x \in \ell_2} \theta_n(x) = 1 / n$, if $n \le r$, 
$\inf_{x \in \ell_2} \theta_n(x) = (2r - n) / n^2$, if $n > r$, and the infimum is attained at the point $x^{(n)}$ such
that $x^{(n)}_k = 0$ for all $k \ne n$, $x^{(n)}_n = -1$, if $n \le r$, and $x^{(n)}_n = - 1 - 2 / n$, if $n > r$. Hence
for any $r \ge 1$ one has
$$
  \inf_{x \in \ell_2} \mathscr{L}(x, 1, r) = \inf_{n \ge 2} \mathscr{L}(x^{(n)}, 1, r) =
  \inf_{n \ge 2 r} \left( -\frac{1}{n} + \frac{2r}{n^2} \right) < 0.
$$
Furthermore, from the fact that the last infimum above is attained for some $n_0 \in \mathbb{N}$ such that either
$n_0 \le 4r \le n_0 + 1$ or $n_0 - 1 \le 4r \le n_0$ it follows that the function $\mathscr{L}(\cdot, 1, r)$ attains a
global minimum at the point $x(r) = x^{(n_0)}$, and $\mathscr{L}(x(r), 1, r) < 0$, while $f^* = f_0(x^*) = 0$. Therefore
$\lambda^* = 1$ is a not an augmented Lagrange multiplier of $(\mathcal{P})$.

Note that $\| x(r) \| \le 2$ for all $r \ge 1$. Therefore the augmented Lagrangian $\mathscr{L}(x, \lambda, r)$ is
non-degenerate for $\lambda = 1$. Furthermore, as it was mentioned above, the function $\mathscr{L}(\cdot, \lambda, r)$
is coercive and Lipschitz continuous on any bounded subset of $\ell_2$. Hence, as it is easy to see, all assumptions of
Theorems~\ref{Thrm_LocPrinciple_Minimizers} and \ref{Thrm_LocPrinciple_SublevelSet}, apart from the assumptions that
$X$ is finite dimensional, are satisfied. Thus, without some additional assumptions, the localization principle does not
hold true in the infinite dimensional case.

Let us also note that $|h(x)| = \inf\{ \| x - y \| \mid y \in \ell_2 \colon h(y) = 0 \}$ (i.e. $h(x)$ has a
\textit{global} error bound; furthermore, it is easy to see that $h(x)$ is \textit{globally} metrically regular, i.e.
metrically regular on $\ell_2 \times h(\ell_2)$), and the function $f_0$ is \textit{globally} Lipschitz continuous.
Consequently, by \cite{Dolgopolik}, Proposition~3.16, the penalty function $F(x, r) = f_0(x) + r |h(x)|$ is exact.
Therefore by Proposition~\ref{Prp_SharpLagrEquivExactPenalty}, the multiplier $\lambda^* = 1$ is an augmented Lagrange
multiplier of the sharp Lagrangian. However, as it was shown above, $\lambda^* = 1$ is not an augmented Lagrange
multiplier of the proximal Lagrangian. It is worth mentioning that this kind of result is impossible in the finite
dimensional case, since if some $\lambda^* \in \Lambda$ is a local augmented Lagrange multiplier of both sharp and
proximal Lagrangians at all globally optimal solutions, and both sharp and proximal Lagrangians are coercive, then by
Theorem~\ref{Thrm_LocPrinciple_SublevelSet} the multiplier $\lambda^*$ is a global augmented Lagrange multiplier of both
sharp and proximal Lagrangians. 
\end{example}

\section{Applications of the localization principle}
\label{Sect_Applications}

In this section, we consider applications of the general theory developed above to the mathematical programming and
nonlinear semidefinite programming problems. Our aim is to show that the existence of a local augmented Lagrange
multiplier can be proved via sufficient optimality conditions, while the existence of a (global) augmented Lagrange
multiplier can be easily proved with the use of the localization principle.

\subsection{Mathematical programming}

Let $X = \mathbb{R}^d$. Consider the mathematical programming problem of the form
\begin{equation} \label{NonlinearProgram}
  \min f_0(x) \quad \text{s.t.} \quad g_i(x) = 0, \quad i \in I, \quad g_j(x) \le 0, \quad j \in J, 
  \quad x \in A.
\end{equation}
Here $f_0, g_s \colon \mathbb{R}^d \to \mathbb{R}$, $s \in I \cup J$ are given functions, $I = \{ 1, \ldots, m_1 \}$,
$J = \{ m_1 + 1, \ldots, m_2 \}$, and $A \subset \mathbb{R}^d$ is a nonempty closed set. For the sake of simplicity, we
suppose that the set $A$ is convex. 

Let $\Lambda = P = \mathbb{R}^{m_2}$, $\langle \cdot, \cdot \rangle$ be the inner product in $\mathbb{R}^s$, 
$s \in \mathbb{N}$, $\| \cdot \|$ be the Euclidean norm, and $\sigma(p) = \| p \|^2 / 2$. Define
$\Phi(x, p) = f_0(x)$, if $g_i(x) + p_i = 0$ for all $i \in I$, and $g_j(x) + p_j \le 0$ for all $j \in J$, and
$\Phi(x, p) = + \infty$, otherwise. Let also $f(x) = \Phi(x, 0)$. Then the problems \eqref{NonlinearProgram} and
$(\mathcal{P})$ coincide.

As it is easy to verify, for any $x \in \mathbb{R}^d$, $\lambda \in \mathbb{R}^{m_2}$ and $r > 0$ one has
\begin{multline*}
  \mathscr{L}(x, \lambda, r) = f_0(x) + 
  \sum_{i = 1}^{m_1} \left( \lambda_i g_i(x) + \frac{r}{2} g_i(x)^2 \right) \\
  + \sum_{j = m_1 + 1}^{m_2} \Bigg( \lambda_j \max\left\{ g_j(x), - \frac{\lambda_j}{r} \right\}
  + \frac{r}{2} \max\left\{ g_j(x), - \frac{\lambda_j}{r} \right\}^2 \Bigg),
\end{multline*}
i.e. $\mathscr{L}$ is the proximal Lagrangian (the Hestenes-Powell-Rockafellar Lagrangian) for the problem
\eqref{NonlinearProgram}. 

At first, we study the local behaviour of the augmented Lagrangian $\mathscr{L}$. Denote by 
$L(x, \lambda) = f_0(x) + \sum_{s = 1}^{m_2} \lambda_s g_s(x)$ the standard Lagrangian for the problem
\eqref{NonlinearProgram}, and denote by $\Omega$ the feasible set of this problem. Let the functions $f_0$ and $g_s$ be
twice differentiable at a point $x^* \in \Omega$. Recall that the pair $(x^*, \lambda^*)$ with $\lambda^* \in
\mathbb{R}^{m_2}$ is called a \textit{KKT pair} of the problem \eqref{NonlinearProgram} if $\lambda_j^* g_j(x^*) = 0$
and $\lambda_j^* \ge 0$ for all $j \in J$, and $\big\langle D_x L(x^*, \lambda^*), v \big\rangle \ge 0$ for all 
$v \in T_A(x^*)$, where $T_A(x^*)$ is the contingent cone to the set $A$ at $x^*$. Note that if $(x^*, \lambda^*)$ is a
KKT pair, then $\mathscr{L}(x^*, \lambda^*, r) = f(x^*)$ for all $r \ge 0$. We say that that a KKT pair 
$(x^*, \lambda^*)$ satisfies the second order sufficient optimality condition if the matrix
$D^2_{xx} L(x^*, \lambda^*)$ is positive definite on the cone 
\begin{multline*}
  K(x^*, \lambda^*) = \Big\{ v \in T_A(x^*) \Bigm| \langle \nabla g_s(x^*), v \rangle = 0, \: s \in I \cup
  J_+(x^*, \lambda^*), \\
  \langle \nabla g_j(x^*), v \rangle \le 0, \: j \in J_0(x^*, \lambda^*), \quad
  \langle D_x L(x^*, \lambda^*), v \rangle = 0 \Big\},
\end{multline*}
where $J_+(x^*, \lambda^*) = \{ j \in J_0(x^*) \mid \lambda_j^* > 0 \}$,
$J_0(x^*, \lambda^*) = \{ j \in J_0(x^*) \mid \lambda_j^* = 0 \}$, and
$J_0(x^*) = \{ j \in J \mid g_j(x^*) = 0 \}$.
The following theorem slightly improves all analogous results on the Hestenes-Powell-Rockafellar Lagrangian (cf.
\cite{Bazaraa}, Theorem~9.3.3; \cite{SunLiMckinnon}, Theorem~2.1; \cite{LiuTangYang}, Theorem~2; \cite{WangZhouXu},
Theorem~2.3; \cite{LuoMastroeniWu}, Theorems~3.1 and 3.2; \cite{ZhouXiuWang}, Theorem~2.8; 
\cite{ZhouZhouYang2014}, Proposition~3.1), since we consider all types of constraints (i.e. equality, inequality and
nonfunctional constraints), do not use any constraint qualifications, and utilize weaker sufficient optimality
conditions than in the aforementioned papers.

\begin{theorem} \label{Thrm_NonlinProgr_LocalAugmMult}
Let $x^*$ be a locally optimal solution of the problem \eqref{NonlinearProgram}. Suppose that the functions $f_0$ and
$g_s$, $s \in I \cup J$, are twice differentiable at the point $x^*$, and a KKT pair $(x^*, \lambda^*)$ satisfies the
second order sufficient optimality condition. Then $\lambda^*$ is a local augmented Lagrange multiplier of
$(\mathcal{P})$ at $x^*$.
\end{theorem}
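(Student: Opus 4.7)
My plan is to first verify the value identity $\mathscr{L}(x^*,\lambda^*,r)=f(x^*)$ for every $r\ge 0$ as a direct computation from the KKT conditions, and then to prove that $x^*$ is a local minimizer of $\mathscr{L}(\cdot,\lambda^*,r)$ on $A$ for all sufficiently large $r$ by contradiction, using a second order Taylor expansion of the proximal Lagrangian at $x^*$ and invoking the second order sufficient condition. The value identity is routine: for $i\in I$ one has $g_i(x^*)=0$; for each $j\in J$ the inner max in the formula for $\mathscr{L}$ vanishes at $x^*$ by virtue of $g_j(x^*)\le 0$, $\lambda_j^*\ge 0$ and complementarity $\lambda_j^* g_j(x^*)=0$. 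Moreover, since $\mathscr{L}(\cdot,\lambda^*,r)$ is nondecreasing in $r$ and the value identity gives $\mathscr{L}(x^*,\lambda^*,r)=f(x^*)$ for every $r$, once $x^*$ is a local minimizer for some $r_0$ it remains so for every $r\ge r_0$; negating the existence of such an $r_0$ therefore yields, for every $r\ge 0$ and every neighbourhood of $x^*$, a point of $A$ at which $\mathscr{L}(\cdot,\lambda^*,r)$ is strictly less than $f_0(x^*)$.

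Assuming this negation, I will pick an increasing sequence $r_n\to\infty$ and points $x_n\in A\setminus\{x^*\}$ with $\mathscr{L}(x_n,\lambda^*,r_n)<f_0(x^*)$ subject to the sharpened bound $t_n:=\|x_n-x^*\|\le 1/(n\,r_n)$. This bound forces $r_n t_n\to 0$, and this is crucial: for every $j\in J_+(x^*,\lambda^*)$ one then has $r_n g_j(x_n)+\lambda_j^*\to\lambda_j^*>0$, so the inner max in $\mathscr{L}$ reduces to $g_j(x_n)$ along the sequence; for $j\in J\setminus J_0(x^*)$ the max vanishes identically near $x^*$. Setting $d_n=x_n-x^*$ and $v_n=d_n/t_n$, extracting a subsequence with $v_n\to v$, $\|v\|=1$, and noting that the convexity of $A$ with $x^*,x_n\in A$ gives $v_n\in T_A(x^*)$ and hence $v\in T_A(x^*)$, a second order Taylor expansion of $f_0$ and of the $g_s$ at $x^*$ yields
\begin{equation*}
0>\mathscr{L}(x_n,\lambda^*,r_n)-f_0(x^*) = t_n\langle D_x L,v_n\rangle + \tfrac{t_n^2}{2}\langle D^2_{xx} L\, v_n,v_n\rangle + \tfrac{r_n}{2}\Xi_n + o(t_n^2),
\end{equation*}
where $D_x L$ and $D^2_{xx} L$ are evaluated at $(x^*,\lambda^*)$ and
\begin{equation*}
\Xi_n = \sum_{s\in I\cup J_+(x^*,\lambda^*)} g_s(x_n)^2 + \sum_{j\in J_0(x^*,\lambda^*)} \max\{g_j(x_n),0\}^2 \ge 0.
\end{equation*}
The condition $r_n t_n\to 0$ is precisely what absorbs the $r_n\cdot O(t_n^3)$ corrections arising from squaring Taylor remainders into the genuine $o(t_n^2)$.

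Finally, I will extract $v\in K(x^*,\lambda^*)$ and then contradict the second order sufficient condition. Because $v_n\in T_A(x^*)$, the KKT inequality forces $\langle D_x L,v_n\rangle\ge 0$; dropping the two nonnegative terms in the displayed inequality and dividing by $t_n$ yields $\langle D_x L,v\rangle\le 0$, hence $=0$. Using $\langle D^2_{xx} L\, v_n,v_n\rangle=O(1)$ together with $t_n\langle D_x L,v_n\rangle\ge 0$, the same display forces $r_n\Xi_n=O(t_n^2)$, and with $r_n\to\infty$ this gives $g_s(x_n)/t_n\to 0$ for $s\in I\cup J_+(x^*,\lambda^*)$ and $\max\{g_j(x_n),0\}/t_n\to 0$ for $j\in J_0(x^*,\lambda^*)$; by Taylor these limits identify with $\langle\nabla g_s(x^*),v\rangle$ and $\max\{\langle\nabla g_j(x^*),v\rangle,0\}$, producing exactly the active gradient conditions defining $K(x^*,\lambda^*)$. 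With $v\in K(x^*,\lambda^*)\setminus\{0\}$ in hand, dropping the first and third (nonnegative) terms of the display and dividing by $t_n^2$ gives $\langle D^2_{xx} L\, v,v\rangle\le 0$, contradicting the second order sufficient condition. I expect the main obstacle to be the nonsmoothness of the proximal Lagrangian at the kinks of the inner max; the device of shrinking $t_n$ faster than $1/r_n$ is precisely what confines $\{x_n\}$ to the region where every max coincides with its smooth branch uniformly in $n$, making the second order Taylor expansion rigorously applicable.
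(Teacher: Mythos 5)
Your proof is correct and follows essentially the same route as the paper's: reduce the proximal Lagrangian to its smooth branches near $x^*$ using complementary slackness, argue by contradiction, normalize $v_n = (x_n - x^*)/\|x_n - x^*\|$, extract a limit $v^* \in K(x^*, \lambda^*)$ with $\|v^*\| = 1$, and contradict the second order sufficient condition. The only difference is technical bookkeeping: the paper controls the $r$-dependent Taylor remainders by working in shrinking neighbourhoods $U_r$ with an explicit $\frac{1}{r}\|x - x^*\|^2$ error bound, whereas you impose the rate $\|x_n - x^*\| \le 1/(n r_n)$ and keep the squared constraint terms exact; both devices accomplish the same thing.
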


\begin{proof}
With the use of the second order Taylor expansion of the functions $f_0$ and $g_s$ at $x_*$ one gets that for any 
$r > 0$ there exists a neighbourhood $U_r$ of $x_*$ such that
\begin{multline} \label{AugmLagr_TaylorExpans_Gen}
  \bigg| \mathscr{L}(x, \lambda^*, r) - \mathscr{L}(x^*, \lambda^*, r)
  - \big\langle D_x L(x^*, \lambda^*), x - x^* \big\rangle \\
  - \frac{1}{2} \big\langle x - x^*, D^2_{xx} L(x^*, \lambda^*) (x - x^*) \big\rangle
  - \frac{r}{2} \sum_{s \in I \cup J_+(x^*, \lambda^*)} \big\langle \nabla g_s(x^*), x - x^* \big\rangle^2 \\
  - \frac{r}{2} \sum_{j \in J_0(x^*, \lambda^*)} \max\big\{ \langle \nabla g_j(x^*), x - x^* \rangle, 0 \big\}^2
  \bigg| \le \frac{1}{r} \| x - x^* \|^2
\end{multline}
for any $x \in U_r$. Here we utilised the fact that the twice differentiability of the functions $g_j$, $j \in J$, at
$x^*$, and the complementary slackness condition imply that for any $x$ in a sufficiently small neighbourhood of $x^*$
one has
$$
  \max\left\{ g_j(x), - \frac{\lambda_j^*}{r} \right\} = \begin{cases}
    g_j(x), & \text{if } j \in J_+(x^*, \lambda^*), \\
    \max\{ g_j(x), 0 \}, & \text{if } j \in J_0(x^*, \lambda^*), \\
    0, & \text{if } j \in J \setminus J_0(x^*),
  \end{cases}
$$
and for all $j \in J_0(x^*, \lambda^*)$ one has
$$
  \max\{ g_j(x), 0 \}^2 = \max\big\{ \langle \nabla g_j(x^*), x - x^* \rangle, 0 \big\}^2 + o( \| x - x^* \|^2 ),
$$
where $o( \| x - x^* \|^2 ) / \| x - x^* \|^2 \to 0$ as $x \to x^*$.

Arguing by reductio ad absurdum, suppose that $\lambda^*$ is not a local augmented Lagrange multiplier at $x^*$. Then
for any $n \in \mathbb{N}$ there exists $x_n \in A \cap U_n$ such that 
$\mathscr{L}(x_n, \lambda^*, n) < \mathscr{L}(x^*, \lambda^*, n)$. Hence applying \eqref{AugmLagr_TaylorExpans_Gen} one
obtains that for any $n \in \mathbb{N}$ the following inequality holds true
\begin{multline} \label{AugmLagr_TaylorExpans}
  0 > \big\langle D_x L(x^*, \lambda^*), \Delta x_n \big\rangle +
  \frac12 \big\langle \Delta x_n, D^2_{xx} L(x^*, \lambda^*) \Delta x_n \big\rangle \\
  + \frac{n}{2} \sum_{s \in I \cup J_+(x^*, \lambda^*)} \big\langle \nabla g_s(x^*), \Delta x_n \big\rangle^2 +
  \frac{n}{2} \sum_{j \in J_0(x^*, \lambda^*)} \max\big\{ \langle \nabla g_j(x^*), \Delta x_n \rangle, 0 \big\}^2 \\
  - \frac{1}{n} \| \Delta x_n \|^2,
\end{multline}
where $\Delta x_n = x_n - x^*$. Denote $v_n = \Delta x_n / \| \Delta x_n \|$. Clearly, without loss of generality one
can suppose that the sequence $\{ v_n \}$ converges to a vector $v^*$ such that $\| v^* \| = 1$. Furthermore, 
$v^* \in T_A(x^*)$ by virtue of the fact that $x_n \in A$ for all $n \in \mathbb{N}$.

From \eqref{AugmLagr_TaylorExpans} it follows that
$$
  0 > \big\langle D_x L(x^*, \lambda^*), \Delta x_n \big\rangle +
  \frac12 \big\langle \Delta x_n, D^2_{xx} L(x^*, \lambda^*) \Delta x_n \big\rangle - \frac{1}{n} \| \Delta x_n \|^2
$$
for any $n \in \mathbb{N}$. Dividing this inequality by $\| \Delta x_n \|$ and passing to the limit
as $n \to \infty$ one obtains that $0 \ge \langle D_x L(x^*, \lambda^*), v^* \rangle$. Consequently, applying
the fact that $(x^*, \lambda^*)$ is a KKT pair one gets that $\langle D_x L(x^*, \lambda^*), v^* \rangle = 0$.

From the facts that $x_n \in A$ by definition, and $A$ is convex it follows that $x_n - x^* \in T_A(x^*)$ for all 
$n \in \mathbb{N}$. Hence taking into account the fact that $(x^*, \lambda^*)$ is a KKT pair one gets that 
$\langle D_x L(x^*, \lambda^*), \Delta x_n \rangle \ge 0$ for all $n \in \mathbb{N}$. Applying this lower estimate in
\eqref{AugmLagr_TaylorExpans}, dividing by $\| \Delta x_n \|^2$, and passing to the limit as $n \to \infty$ one obtains
that $\langle \nabla g_s(x^*), v^* \rangle = 0$ for all $s \in I \cup J_+(x^*, \lambda^*)$, and
$\langle \nabla g_j(x^*), v^* \rangle \le 0$ for all $j \in J_0(x^*, \lambda^*)$. Thus, $v^* \in K(x^*, \lambda^*)$.

Applying \eqref{AugmLagr_TaylorExpans}, and the inequality $\langle D_x L(x^*, \lambda^*), \Delta x_n \rangle \ge 0$
one obtains that
$$
  0 > \frac12 \big\langle \Delta x_n, D^2_{xx} L(x^*, \lambda^*) \Delta x_n \big\rangle -
  \frac{1}{n} \| \Delta x_n \|^2 \quad \forall n \in \mathbb{N}.
$$
Dividing this inequality by $\| \Delta x_n \|^2$, and passing to the limit as $n \to \infty$, one gets that 
$0 \ge \langle v^*, D^2_{xx} L(x^*, \lambda^*) v^* \rangle$, which contradicts the fact that the pair $(x^*, \lambda^*)$
satisfies the second order sufficient optimality condition.
\end{proof}

Now, we can obtain simple necessary and sufficient conditions for the existence of an augmented Lagrange multiplier of
\eqref{NonlinearProgram}. Denote by $\Omega^* \subset \Omega$ the set of globally optimal solutions of this problem. We
suppose that $\Omega^* \ne \emptyset$.

It is easy to see that if $\lambda^*$ is an augmented Lagrange multiplier of the problem \eqref{NonlinearProgram}, and
the functions $f$ and $g_s$ are twice differentiable, then for any $x^* \in \Omega^*$ the pair $(x^*, \lambda^*)$ is a
KKT pair that satisfies the second order necessary optimality condition:
$\langle v, D^2_{xx} L(x^*, \lambda^*) v \rangle \ge 0$ for all $v \in K(x^*, \lambda^*)$.
Thus, for the existence of an augmented Lagrange multiplier of \eqref{NonlinearProgram} it is necessary that there
exists a multiplier $\lambda^* \in \mathbb{R}^{m_2}$ such that for any $x^* \in \Omega^*$ the pair $(x^*, \lambda^*)$ is
a KKT pair that satisfies the second order \textit{necessary} optimality condition. It should be noted that, in the
general case, there might not exist a multiplier $\lambda^*$ that satisfies the KKT conditions at every globally optimal
solution of the problem \eqref{NonlinearProgram}. In particular, if the problem \eqref{NonlinearProgram} has at least
two global minimizers with disjoint sets of Lagrange multipliers, then an augmented Lagrange multiplier of this problem
does not exist.

\begin{remark}
Since the existence of an augmented Lagrange multiplier implies the existence of a KKT-point of the problem
\eqref{NonlinearProgram}, the problem of existence of augmented Lagrange multipliers is closely related to the
study of KKT optimality conditions for nonlinear programming problems. In particular, any necessary condition for the
existence of a KKT-point is also a necessary conditions for the existence of an augmented Lagrange multiplier.
For some recent developments on KKT conditions see, e.g.,
\cite{YangMeng2007,MengYang2010,FloresBazanMastroeni} and references therein.
\end{remark}

Under some additional assumptions, one simply has to replace ``necessary optimality condition'' by
``sufficient optimality condition'' in order to obtain sufficient conditions for the existence of an augmented Lagrange
multiplier.

\begin{theorem} \label{Thrm_NonlinProgr_GlobalAugmMult}
Let $A$ be closed, $f_0$ be l.s.c. on $A$, and $g_s$, $s \in I \cup J$, be continuous on $A$. Suppose also that the
functions $f_0$ and $g_s$, $s \in I \cup J$, are twice differentiable at every point $x^* \in \Omega^*$, and there
exists a multiplier $\lambda^* \in \mathbb{R}^{m_2}$ such that for any $x^* \in \Omega^*$ the pair $(x^*, \lambda^*)$ is
a KKT pair satisfying the second order sufficient optimality condition. Then the following statements are equivalent:
\begin{enumerate}
\item{$\lambda^*$ is an augmented Lagrange multiplier of the problem \eqref{NonlinearProgram};}

\item{there exist $r_0 > 0$ and $K > 0$ such that for any $r \ge r_0$ there exists 
$x_r \in \argmin_{x \in A} \mathscr{L}(x, \lambda^*, r)$ with $\| x_r \| \le K$;
}

\item{there exist $r_0 > 0$ and $\delta > 0$ such that the function $\mathscr{L}(\cdot, \lambda^*, r_0)$ is bounded
below on $A$, and the set
\begin{equation} \label{SublevelSet_NonlinearProgram}
  \Big\{ x \in A \Bigm| \mathscr{L}(x, \lambda^*, r_0) < f^*, \: 
  \varphi(x) < \delta \Big\}
\end{equation}
is bounded or empty, where $\varphi(x) = \sum_{i \in I} g_i(x)^2 + \sum_{j \in J} \max\{ 0, g_j(x) \}^2$.
\label{Assump_SubLevelSetBoundedness}}
\end{enumerate}
In particular, an augmented Lagrange multiplier of \eqref{NonlinearProgram} exists, provided one of the following
additional assumptions is satisfied:
\begin{enumerate}
\item{$A$ is compact;
}

\item{$f_0$ is coercive on the set $A$, i.e. $f_0(x_n) \to + \infty$ as $n \to \infty$ for any sequence 
$\{ x_n \} \subset A$ such that $\| x_n \| \to +\infty$ as $n \to \infty$;
}

\item{the function $f_0(\cdot) + r_0 \varphi(x)$ is coercive on $A$ for some $r_0 \ge 0$;
}

\item{the function $\varphi(x)$ is coercive on $A$, and the function $\mathscr{L}(\cdot, \lambda^*, r_0)$ is bounded
below on $A$ for some $r_0 > 0$.
}
\end{enumerate}
\end{theorem}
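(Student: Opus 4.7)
The plan is to assemble the result from three pieces established earlier in the paper: Theorem \ref{Thrm_NonlinProgr_LocalAugmMult} to secure $\lambda^* \in \mathcal{A}_{loc}(\mathcal{P})$, Theorem \ref{Thrm_LocPrinciple_Minimizers} to produce the equivalence (1) $\Leftrightarrow$ (2), and Theorem \ref{Thrm_LocPrinciple_SublevelSet} to produce the equivalence (1) $\Leftrightarrow$ (3). The four additional items are then obtained by checking that each implies (2) or (3).

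First I would verify the structural hypotheses needed by the localization principle. Since $X = \mathbb{R}^d$ is finite dimensional and $A$ is closed, the bilinear coupling is automatically continuous in $p$ at the origin, and $\sigma(p) = \|p\|^2/2$ plainly has a valley at zero. The lower semicontinuity of $\Phi$ on $A \times \{0\}$ and of $\mathscr{L}(\cdot, \lambda^*, r)$ on $A$ for any $r \ge 0$ both follow from the l.s.c. of $f_0$ and continuity of $g_s$ on $A$, read off the explicit formula for $\mathscr{L}$ displayed before Theorem \ref{Thrm_NonlinProgr_LocalAugmMult}; existence of a globally optimal solution is built into $\Omega^* \ne \emptyset$. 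Applying Theorem \ref{Thrm_NonlinProgr_LocalAugmMult} at every $x^* \in \Omega^*$ gives $\lambda^* \in \mathcal{A}(x^*)$, hence $\lambda^* \in \mathcal{A}_{loc}(\mathcal{P})$. Theorem \ref{Thrm_LocPrinciple_Minimizers} then immediately yields the equivalence of (1) and (2) (condition (2) is exactly non-degeneracy for $\lambda = \lambda^*$), and Theorem \ref{Thrm_LocPrinciple_SublevelSet} yields the equivalence of (1) and (3).

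It remains to show that each of the four additional conditions forces (2) or (3). If $A$ is compact, l.s.c. of $\mathscr{L}(\cdot, \lambda^*, r)$ on $A$ produces a global minimizer lying in the bounded set $A$, giving (2). For the two coercivity conditions on $f_0$ or $f_0 + r_0 \varphi$, I would use the pointwise inequality $\mathscr{L}(x, \lambda^*, r_0) \ge f_0(x) + r_0 \varphi(x) - \|\lambda^*\|\sqrt{\varphi(x)\,\cdot\,\text{const}}$ extracted from the explicit formula (or, equivalently, complete the square in each summand of $\mathscr{L}$), which combined with $\varphi(x) < \delta$ turns $\mathscr{L}(x, \lambda^*, r_0) < f^*$ into an upper bound on $f_0(x)$ (resp.\ $f_0(x) + r_0 \varphi(x)$); coercivity then bounds $\|x\|$, and boundedness below on $A$ also follows. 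Finally, if $\varphi$ is coercive on $A$, then $\{x \in A : \varphi(x) < \delta\}$ is already bounded, and the required boundedness below is assumed, so (3) holds.

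The only non-routine point in the argument is the sublevel-set estimate under items (ii) and (iii): one must control the linear terms in $\lambda^*$ within $\mathscr{L}$ so that the inequality $\mathscr{L}(x, \lambda^*, r_0) < f^*$ can be converted into a coercivity-usable bound on $f_0(x)$ or on $f_0(x) + r_0 \varphi(x)$. This is a direct manipulation of the explicit Hestenes–Powell–Rockafellar formula and presents no conceptual difficulty; after it is in place, the theorem is a clean application of the localization principle.
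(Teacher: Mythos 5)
Your proposal is correct and follows exactly the route the paper takes: Theorem~\ref{Thrm_NonlinProgr_LocalAugmMult} puts $\lambda^*$ in $\mathcal{A}_{loc}(\mathcal{P})$, and Theorems~\ref{Thrm_LocPrinciple_Minimizers} and \ref{Thrm_LocPrinciple_SublevelSet} give the equivalences (1)$\Leftrightarrow$(2) and (1)$\Leftrightarrow$(3). You in fact supply more detail than the paper does (which compresses all of this into two sentences), and your verification of the four additional sufficient conditions via completing the square in the Hestenes--Powell--Rockafellar formula to get $\mathscr{L}(x,\lambda^*,r)\ge f_0(x)+c\,r\,\varphi(x)-C/r$ is the right, and correct, way to fill in the part the paper leaves implicit.
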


\begin{proof}
By Theorem~\ref{Thrm_NonlinProgr_LocalAugmMult}, the multiplier $\lambda^*$ is a local augmented Lagrange multiplier at
every $x^* \in \Omega^*$. Then applying the localization principle (Theorems~\ref{Thrm_LocPrinciple_Minimizers} and
\ref{Thrm_LocPrinciple_SublevelSet}) one obtains the required result.	 
\end{proof}

\begin{remark}
{(i)~Note that the set \eqref{SublevelSet_NonlinearProgram} is bounded for some $r_0 > 0$ and $\delta > 0$, in
particular, if for some $x_0 \in \Omega$ the set
$\{ x \in A \mid \mathscr{L}(x, \lambda^*, r_0) < f_0(x_0), \: \varphi(x) < \delta \}$ is bounded. Furthermore,
one can easily verify that the set \eqref{SublevelSet_NonlinearProgram} is bounded for some 
$r_0 > 0$ and $\delta > 0$, if the set $\{ x \in A \mid f_0(x) < f^* + \alpha, \: \varphi(x) < \delta \}$ is
bounded for some $\alpha > 0$ and $\delta > 0$.
}

\noindent{(ii)~The theorem above provides first simple \textit{necessary and sufficient} conditions for the existence of
an augmented Lagrange multiplier of the Hestenes-Powell-Rockafellar Lagrangian function for a mathematical programming
problem that does not rely on any assumptions on the optimal value functions or optimal/suboptimal solutions of a
perturbed problem (cf.~\cite{ShapiroSun2004}). Furthermore, Theorem~\ref{Thrm_NonlinProgr_GlobalAugmMult} improves all
similar results on the existence of augmented Lagrange multipliers 
\cite{SunLiMckinnon,LiuTangYang,WangZhouXu,LuoMastroeniWu,ZhouXiuWang,ZhouZhouYang2014}, since it is based on weaker
sufficient optimality conditions, is formulated as \textit{necessary and sufficient} conditions, and does not require
any additional assumptions (such as the compactness of the set $A$ as in \cite{SunLiMckinnon}, the uniqueness of a
globally optimal solution as in \cite{LuoMastroeniWu}, the boundedness of the least exact penalty parameters at globally
optimal solutions as in \cite{LiuTangYang,WangZhouXu} or the compactness of certain sublevel sets as in
\cite{LiuTangYang,WangZhouXu,LuoMastroeniWu,ZhouXiuWang,ZhouZhouYang2014}).
}
\end{remark}

Theorems~\ref{Thrm_NonlinProgr_LocalAugmMult} and \ref{Thrm_NonlinProgr_GlobalAugmMult} can be easily extended to 
the case when the functions $f_0$ and $g_s$ are not twice differentiable, but are $C^{1, 1}$ function. In particular,
let us extend Theorem~\ref{Thrm_NonlinProgr_LocalAugmMult} to this more general case.

For the sake of simplicity, let $A = \mathbb{R}^d$. Fix $x^* \in \Omega$, and suppose that the functions
$f_0$ and $g_s$ are $C^{1, 1}$ at $x^*$, i.e. suppose that $f_0$ and $g_s$ are differentiable near
$x^*$, and their gradients are Lipschitz continuous in a neighbourhood of this point. Recall that if a function $h$ is
$C^{1, 1}$ at $x^*$, then \textit{the generalized Hessian matrix} $\partial^2 h(x^*)$ of $h$ at $x^*$ is defined as the
convex hull of the set of all those matrices $M$ for which there exists a sequence $\{ x_n \}$ such that
$x_n \to x^*$ and $\nabla^2 h(x_n) \to M$ as $n \to \infty$ (see, e.g., \cite{HiriartUrruty,KlatteTammer}). One
can verify that $\partial^2 h(x^*)$ is a nonempty compact convex set of symmetric matrices.

Let $(x^*, \lambda^*)$ be a KTT pair satisfying the strict complementary slackness condition, i.e. 
$J_0(x^*, \lambda^*) = \emptyset$. We say that the pair $(x^*, \lambda^*)$ satisfies the \textit{generalized} second
order sufficient optimality condition if each $M \in \partial^2_{xx} L(x^*, \lambda^*)$ is positive definite on 
the subspace 
$\{ v \in \mathbb{R}^d \mid \langle \nabla g_s(x^*), v \rangle = 0, \: s \in I \cup J_+(x^*, \lambda^*)\}$
(see~\cite{KlatteTammer}). Here $\partial^2_{xx} L(x^*, \lambda^*)$ is the generalized Hessian matrix of the function
$L(\cdot, \lambda^*)$ at $x^*$. Similarly to the case when the functions $f_0$ and $g_s$ are twice differentiable, the
validity of the generalized second order sufficient optimality guarantees that $\lambda^*$ is a local augmented Lagrange
multiplier.

\begin{theorem} \label{Thrm_NonlinProgr_LocalAugmMult_C11}
Let $x^*$ be a locally optimal solution of the problem \eqref{NonlinearProgram}, and the functions $f_0$ and $g_s$, 
$s \in I \cup J$, be $C^{1,1}$ at the point $x^*$. Suppose that a KKT pair $(x^*, \lambda^*)$ satisfies the strict
complementary slackness condition, and the generalized second order sufficient optimality condition. Then $\lambda^*$ is
a local augmented Lagrange multiplier of $(\mathcal{P})$ at $x^*$.
\end{theorem}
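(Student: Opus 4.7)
The plan is to mimic the contradiction argument from the proof of Theorem~\ref{Thrm_NonlinProgr_LocalAugmMult}, replacing the classical second order Taylor expansion with a $C^{1,1}$ analogue based on the generalized Hessian. The strict complementary slackness and the assumption $A = \mathbb{R}^d$ will let me rewrite $\mathscr{L}(\cdot,\lambda^*,r)$ near $x^*$ as a single $C^{1,1}$ function of $x$, to which a Hiriart--Urruty-type mean value theorem applies.

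First I would use $J_0(x^*,\lambda^*) = \emptyset$, so that $J_+(x^*,\lambda^*) = J_0(x^*)$, to eliminate the $\max$-operations in $\mathscr{L}$ on a neighbourhood of $x^*$. For $j \in J_+(x^*,\lambda^*)$, since $\lambda_j^* > 0$ and $g_j$ is continuous, one has $g_j(x) \ge -\lambda_j^*/r$ for $x$ close to $x^*$ and $r$ large, so $\max\{g_j(x), -\lambda_j^*/r\} = g_j(x)$; for $j \in J \setminus J_0(x^*)$, the KKT conditions yield $\lambda_j^* = 0$ and $g_j(x) < 0$ nearby, so the $j$-th summand contributes nothing. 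Hence on a neighbourhood $U$ of $x^*$ and for all sufficiently large $r$,
$$
  \mathscr{L}(x,\lambda^*,r) = L(x,\lambda^*) + \frac{r}{2}\sum_{s \in I \cup J_+(x^*,\lambda^*)} g_s(x)^2,
$$
which is $C^{1,1}$ in $x$ on $U$.

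Next I would invoke the mean value theorem for $C^{1,1}$ functions (see~\cite{HiriartUrruty,KlatteTammer}): for every $x \in U$ there exists a symmetric matrix $M(x)$ in the convex hull of $\{ N \mid N \in \partial^2_{xx} L(\xi,\lambda^*), \: \xi \in [x^*,x] \}$ such that
$$
  L(x,\lambda^*) = L(x^*,\lambda^*) + \langle D_x L(x^*,\lambda^*), x - x^*\rangle + \frac{1}{2}\langle x - x^*, M(x)(x - x^*)\rangle.
$$
For the quadratic penalty part, $C^{1,1}$ regularity gives $g_s(x) = \langle \nabla g_s(x^*), x - x^*\rangle + O(\|x - x^*\|^2)$ for $s \in I \cup J_+(x^*,\lambda^*)$, whence $g_s(x)^2 = \langle \nabla g_s(x^*), x - x^*\rangle^2 + o(\|x - x^*\|^2)$.

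Arguing by reductio ad absurdum, suppose $\lambda^*$ is not a local augmented Lagrange multiplier at $x^*$. Then there exist sequences $r_n \to \infty$ and $x_n \to x^*$, $x_n \in U$, such that $\mathscr{L}(x_n,\lambda^*,r_n) < f(x^*)$. Using $D_x L(x^*,\lambda^*) = 0$ (the KKT condition for $A = \mathbb{R}^d$), dividing by $\|x_n - x^*\|^2$, and setting $v_n = (x_n - x^*)/\|x_n - x^*\|$, one obtains
$$
  0 > \frac{1}{2}\langle v_n, M_n v_n\rangle + \frac{r_n}{2}\sum_{s \in I \cup J_+(x^*,\lambda^*)} \langle \nabla g_s(x^*), v_n\rangle^2 + o(1),
$$
where $M_n = M(x_n)$. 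After passing to a subsequence, $v_n \to v^*$ with $\|v^*\| = 1$. The main obstacle is controlling $M_n$ in the limit: I would use the local boundedness and upper semicontinuity of the generalized Hessian, together with the fact that the segment $[x^*,x_n]$ shrinks to $\{x^*\}$, to extract a further subsequence along which $M_n \to M$ for some $M \in \partial^2_{xx} L(x^*,\lambda^*)$. Since $r_n \to \infty$, the nonnegative second summand forces $\langle \nabla g_s(x^*), v^*\rangle = 0$ for every $s \in I \cup J_+(x^*,\lambda^*)$; passing to the limit then yields $\langle v^*, M v^*\rangle \le 0$, contradicting the generalized second order sufficient optimality condition.
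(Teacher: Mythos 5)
Your argument is correct in substance, but it takes a genuinely different route from the paper. You reprove the result from first principles by running the same reductio ad absurdum as in Theorem~\ref{Thrm_NonlinProgr_LocalAugmMult}, with the classical Taylor expansion replaced by the second-order mean value theorem for $C^{1,1}$ functions; the compactness and outer semicontinuity of the generalized Hessian then let you extract a limit matrix $M \in \partial^2_{xx} L(x^*, \lambda^*)$ and a limit direction $v^*$ violating the generalized second order condition. The paper instead argues directly: after the same local simplification of $\mathscr{L}(\cdot,\lambda^*,r)$ via strict complementarity, it applies the sum rule for generalized Hessians (\cite{HiriartUrruty}, Theorem~2.2) to locate $\partial^2_{xx}\mathscr{L}(x^*,\lambda^*,r)$ inside $\partial^2_{xx} L(x^*,\lambda^*) + r\sum_s \nabla g_s(x^*)\nabla g_s(x^*)^T$, invokes a Debreu-type lemma (\cite{LiuYang}, Lemma~3.1) to get uniform positive definiteness of this set for large $r$, and then cites the known $C^{1,1}$ second-order sufficiency theorem (\cite{KlatteTammer}, Theorem~1). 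Your approach is self-contained and uniform with the twice-differentiable case (in effect you re-derive the Klatte--Tammer sufficiency criterion for this particular function), while the paper's is shorter and makes the quantitative threshold $r_0$ explicit through the Debreu lemma. Two technical points you should tighten. First, the identity $\mathscr{L}(x,\lambda^*,r) = L(x,\lambda^*) + \tfrac{r}{2}\sum_{s} g_s(x)^2$ holds for $x$ with $g_j(x) \ge -\lambda_j^*/r$, $j \in J_+(x^*,\lambda^*)$, so the neighbourhood on which it is valid shrinks as $r$ grows; you must therefore pick $x_n$ inside a neighbourhood $U_{r_n}$ depending on $r_n$ (failure of local minimality allows this), exactly as the paper does with $U_r$ in the proof of Theorem~\ref{Thrm_NonlinProgr_LocalAugmMult}. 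Second, and for the same reason, the remainder $\tfrac{r_n}{2}\sum_s\bigl(g_s(x_n)^2 - \langle \nabla g_s(x^*), x_n - x^*\rangle^2\bigr)$ divided by $\|x_n - x^*\|^2$ is of the form $r_n \cdot o(1)$ and is not automatically $o(1)$ when $r_n \to \infty$; it becomes $o(1)$ only after you shrink $U_{r_n}$ so that the $o(1)$ factor is, say, at most $r_n^{-2}$ there. With these adjustments (which mirror devices already present in the paper's proof of the smooth case), your proof is complete.
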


\begin{proof}
From the facts that the functions $g_s$ are continuous at $x^*$, and the strict complementary slackness condition holds
true it follows that there exists a neighbourhood $U$ of $x^*$ such that for any $x \in U$ and $r > 0$ one has
$$
  \mathscr{L}(x, \lambda^*, r) = L(x^*, \lambda^*) + 
  \frac{r}{2} \sum_{s \in I \cup J_+(x^*, \lambda^*)} g_s(x)^2.
$$
Consequently, applying the sum rule for generalized Hessian matrices (\cite{HiriartUrruty}, Theorem~2.2) one obtains
that the set $\partial^2_{xx} \mathscr{L}(x^*, \lambda^*, r)$ is contained in the set
$H(x^*, \lambda^*, r) = \partial^2_{xx} L(x^*, \lambda^*) + 
r \sum_{s \in I \cup J_+(x^*, \lambda^*)} \nabla g_s(x^*) \nabla g_s(x^*)^T$.
Taking into account the fact that the generalized second order sufficient optimality condition holds true, one can
easily verify that there exists $r_0 > 0$ such that for any $r \ge r_0$ all matrices $M \in H(x^*, \lambda^*, r)$ are
positive definite (see~\cite{LiuYang}, Lemma~3.1). Hence applying the second-order sufficient optimality condition for
$C^{1, 1}$ functions (\cite{KlatteTammer}, Theorem~1) one obtains that $x^*$ is a point of local minimum of 
the function $\mathscr{L}(\cdot, \lambda^*, r)$ for any $r \ge r_0$. Therefore $\lambda^*$ is a local augmented
Lagrange multiplier at $x^*$.	 
\end{proof}

\subsection{Nonlinear semidefinite programming}

Let us note that Theorems~\ref{Thrm_NonlinProgr_LocalAugmMult}--\ref{Thrm_NonlinProgr_LocalAugmMult_C11} can be easily
extended to the case of second-order cone programming, cone constrained optimization, nonlinear semidefinite
programming, semi-infinite programming and other constrained optimization problems. One simply has to prove the
existence of a local augmented Lagrange multiplier (or a local saddle point) with the use of sufficient optimality
conditions as in \cite{ZhouChen2015,ZhouZhouYang2014,WuLuoYang,WangZhouXu}, and then apply the localization principle in
order to obtain simple necessary and sufficient conditions for the existence of an augmented Lagrange multiplier that
strengthen all similar results existing in the literature. In order to illustrate this statement, let us consider the
following nonlinear semidefinite programming problem
\begin{equation} \label{NonlinearSemiDefProg}
  \min f_0(x) \quad \text{subject to} \quad G(x) \preceq 0, \quad h(x) = 0,
\end{equation}
where $G \colon \mathbb{R}^d \to \mathbb{S}^m$ and $h = (h_1, \ldots, h_l) \colon \mathbb{R}^d \to \mathbb{R}^l$ are
given function, $\mathbb{S}^m$ denotes the set of all $m \times m$ real symmetric matrices, and the relation 
$G(x) \preceq 0$ means that the matrix $G(x)$ is negative semidefinite. Denote by $Tr(\cdot)$ the trace of a matrix, by 
$A \bullet B = Tr(AB)$ the inner product of $A, B \in \mathbb{S}^m$, and by $\| A \|_F = \sqrt{Tr(A^2)}$ the Frobenius
norm of $A \in \mathbb{S}^m$.

Let $\Lambda = P = \mathbb{S}^m \times \mathbb{R}^l$, and 
$\langle \lambda, p \rangle = \mu \bullet q + \nu^T w$ for any 
$\lambda = (\mu, \nu) \in \Lambda$ and $p = (q, w) \in P$. Define $\Phi(x, p) = f_0(x)$, if
$G(x) + q \preceq 0$ and $h(x) + w = 0$; $\Phi(x, p) = + \infty$, otherwise, and
$\sigma(p) = 0.5 \| q \|^2_F + 0.5 \| w \|^2$. Then one can verify (see~(2.9) in \cite{ShapiroSun2004}) that for any 
$x \in \mathbb{R}^d$, $\lambda \in \Lambda$ and $r > 0$ one has
$$
  \mathscr{L}(x, \lambda, r) = f_0(x) + \frac{1}{2r} \Big( Tr\big( [r G(x) + \mu]_+^2 \big) - Tr(\mu^2) \Big) + 
  \nu^T h(x) + \frac{r}{2} \| h(x) \|^2,
$$
where $[\cdot]_+$ denotes the projection of a matrix onto the cone of $m \times m$ positive semidefinite matrices.

In order to obtain necessary and sufficient conditions for the existence of an augmented Lagrange multiplier for the
problem \eqref{NonlinearSemiDefProg}, let us recall KKT optimality conditions for this problem
\cite{Shapiro_SemiDef,WuLuoYang}. Let $x^*$ be a locally optimal solution of the problem \eqref{NonlinearSemiDefProg},
and the functions $f_0$, $G$ and $h$ be twice differentiable at $x^*$. A pair $(x^*, \lambda^*)$, where $\lambda^* =
(\mu^*, \nu^*) \in \Lambda$, is called \textit{a KKT pair} of the problem \eqref{NonlinearSemiDefProg}, if $\mu^*$ is
positive semidefinite, $\mu^* G(x^*) = 0$ and $D_x L(x^*, \lambda^*) = 0$, where 
$L(x, \lambda) = f_0(x) + \mu \bullet G(x) + \nu^T h(x)$ is the classical Lagrangian. Suppose that $\rank(G(x^*)) < m$.
One says that a KKT pair $(x^*, \lambda^*)$ satisfies \textit{the second order sufficient optimality condition}, if the
matrix
$$
  D^2_{xx} L(x^*, \lambda^*) - 
  2 \Big[ \mu^* \bullet \Big( D_{x_i} G(x^*) G(x^*)^{\dagger} D_{x_j} G(x^*)  \Big) \Big]_{i, j = 1}^d
$$
is positive definite on the cone
$$
  C(x^*) = \Big\{ v \in \mathbb{R}^d \Bigm| 
  \sum_{i = 1}^d v_i E_0^T D_{x_i} G(x^*) E_0 \preceq 0, \: \nabla h(x^*) v = 0, \:
  \nabla f_0(x^*)^T v = 0 \Big\},
$$
where $G(x^*)^{\dagger}$ is the Moore-Penrose pseudoinverse of the matrix $G(x^*)$, and $E_0$ is 
$m \times (m - \rank(G(x^*)))$ matrix composed from the eigenvectors of $G(x^*)$ corresponding to its zero eigenvalue.
Note that if $\rank(G(x^*)) = m$, then the constraint $G(x) \preceq 0$ is inactive at $x^*$, i.e. $x^*$ is a locally
optimal solution of the problem of minimizing $f_0(x)$ subject to $h(x) = 0$. In this case, $\mu^* = 0$, and we utilize
sufficient optimality conditions for the problem \eqref{NonlinearProgram}.

Denote by $\Omega^*$ the set of optimal solutions of the problem \eqref{NonlinearSemiDefProg}. We suppose 
that $\Omega^* \ne \emptyset$. The following result strengthens Theorem~4 in \cite{WuLuoYang}, since we do not assume
that an optimal solution of \eqref{NonlinearSemiDefProg} is unique, and, more importantly, we obtain \textit{necessary
and sufficient} conditions for the existence of an augmented Lagrange multiplier, in contrast to only
\textit{sufficient} conditions in \cite{WuLuoYang}.

\begin{theorem}
Let $f_0$ be l.s.c., and $G$ and $h$ be continuous. Let also the functions $f_0$, $G$ and $h$ be twice
differentiable at every point $x^* \in \Omega^*$, and let there exists a multiplier $\lambda^* \in \Lambda$ such that
for any $x^* \in \Omega^*$ the pair $(x^*, \lambda^*)$ is a KKT pair satisfying the second order sufficient optimality
condition. Then $\lambda^*$ is an augmented Lagrange multiplier of the problem \eqref{NonlinearSemiDefProg} if and only
if there exist $r_0 > 0$ and $\delta > 0$ such that the function $\mathscr{L}(\cdot, \lambda^*, r_0)$ is bounded below,
and the set
\begin{equation} \label{SublevelSet_SemiDefProg}
  \Big\{ x \in \mathbb{R}^d \Bigm| \mathscr{L}(x, \lambda^*, r_0) < f^*, \: \varphi(x) < \delta \Big\}
\end{equation}
is bounded or empty, where $\varphi(x) = Tr([G(x)]_+^2) + \| h(x) \|^2$.
\end{theorem}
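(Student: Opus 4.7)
The plan is to mirror the proof of Theorem~\ref{Thrm_NonlinProgr_GlobalAugmMult}: first establish that $\lambda^* \in \mathcal{A}_{loc}(\mathcal{P})$ by exploiting the second order sufficient optimality condition at each $x^* \in \Omega^*$, then invoke the localization principle Theorem~\ref{Thrm_LocPrinciple_SublevelSet} with $A = \mathbb{R}^d$. Necessity is immediate: if $\lambda^* \in \mathcal{A}(\mathcal{P})$, then Proposition~\ref{Prp_ExactPenaltyRepr} yields $\mathscr{L}(x, \lambda^*, r) \ge f^*$ for all $x \in \mathbb{R}^d$ and $r \ge r(\lambda^*)$, so $\mathscr{L}(\cdot, \lambda^*, r)$ is bounded below and the sublevel set \eqref{SublevelSet_SemiDefProg} is empty for every $\delta > 0$.

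For sufficiency, the first (and main) step is to show that $\lambda^*$ is a local augmented Lagrange multiplier at every $x^* \in \Omega^*$. This is essentially \cite{WuLuoYang}, Theorem~4: a second order Taylor expansion of $\mathscr{L}(\cdot, \lambda^*, r)$ about $x^*$ --- using twice differentiability of $f_0$, $G$, $h$ together with the second order expansion of the nonsmooth term $Tr([r G(x) + \mu^*]_+^2)$ via the spectral decomposition of $G(x^*)$ --- combined with the KKT conditions and the second order sufficient optimality condition on $C(x^*)$ gives $\mathscr{L}(x, \lambda^*, r) \ge \mathscr{L}(x^*, \lambda^*, r) = f^*$ for all $x$ in some neighbourhood of $x^*$ and all sufficiently large $r$. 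Hence $\lambda^* \in \mathcal{A}(x^*)$ for every globally optimal $x^*$, so $\lambda^* \in \mathcal{A}_{loc}(\mathcal{P})$.

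Next, the remaining hypotheses of Theorem~\ref{Thrm_LocPrinciple_SublevelSet} are verified directly: $X = \mathbb{R}^d$ is finite dimensional, $A = \mathbb{R}^d$ is closed, $\Omega^* \ne \emptyset$ by assumption, $\Phi$ is l.s.c.\ on $A \times \{0\}$ since $f_0$ is l.s.c.\ and $G$, $h$ are continuous, the coupling $\langle \lambda, p \rangle = \mu \bullet q + \nu^T w$ is continuous in $p$, and $\sigma(p) = \tfrac{1}{2}(\|q\|_F^2 + \|w\|^2)$ plainly has a valley at zero; the same continuity assumptions make $\mathscr{L}(\cdot, \lambda^*, r)$ l.s.c. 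The penalty term associated with $\mathscr{L}$ is obtained by minimising $\tfrac{1}{2}(\|q\|_F^2 + \|w\|^2)$ subject to $G(x) + q \preceq 0$ and $h(x) + w = 0$: the optimal choices are $w = -h(x)$ and $q = -[G(x)]_+$ (the projection of the zero matrix onto $\{q \in \mathbb{S}^m : q \preceq -G(x)\}$), giving penalty term $\tfrac{1}{2}(Tr([G(x)]_+^2) + \|h(x)\|^2) = \tfrac{1}{2}\varphi(x)$. The harmless factor $\tfrac{1}{2}$ is absorbed into $\delta$, so the sublevel-set condition on \eqref{SublevelSet_SemiDefProg} coincides with the assumption of Theorem~\ref{Thrm_LocPrinciple_SublevelSet}, which then delivers $\lambda^* \in \mathcal{A}(\mathcal{P})$.

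The principal technical obstacle is the second order expansion of the nonsmooth term $Tr([r G(x) + \mu^*]_+^2)$; this produces the ``sigma-term'' correction $2\mu^* \bullet (D_{x_i} G(x^*) G(x^*)^\dagger D_{x_j} G(x^*))$ that enters the second order sufficient condition via the pseudoinverse and the matrix $E_0$. This calculation is standard in nonlinear SDP (see \cite{Shapiro_SemiDef}) and is already carried out in \cite{WuLuoYang}, so it can be cited rather than redone; the novelty here lies in plugging this local result into the localization principle to obtain sharp global \emph{necessary and sufficient} conditions that strengthen \cite{WuLuoYang}, Theorem~4 by dropping uniqueness of the minimiser and by fully characterising (not merely implying) the existence of an augmented Lagrange multiplier.
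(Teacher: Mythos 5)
Your proposal is correct and follows essentially the same route as the paper: establish that $\lambda^*$ is a local augmented Lagrange multiplier at each $x^* \in \Omega^*$ via the second order sufficient conditions (the paper cites \cite{WuLuoYang}, Theorem~3 for the local result, together with Theorem~\ref{Thrm_NonlinProgr_LocalAugmMult} to cover the case $\rank(G(x^*)) = m$ where the semidefinite constraint is inactive), and then apply Theorem~\ref{Thrm_LocPrinciple_SublevelSet}. Your explicit verification of the hypotheses of the localization principle, including the computation of the penalty term $\tfrac{1}{2}\varphi(x)$ with the factor $\tfrac{1}{2}$ absorbed into $\delta$, is a correct filling-in of details the paper leaves implicit.
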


\begin{proof}
By Theorem~\ref{Thrm_NonlinProgr_LocalAugmMult}, and \cite{WuLuoYang}, Theorem~3, the multiplier $\lambda^*$ is a local
augmented Lagrange multiplier at every $x^* \in \Omega^*$. Applying the localization principle
(Theorem~\ref{Thrm_LocPrinciple_SublevelSet}) one obtains the desired result.	 
\end{proof}

\begin{remark}
One can verify that the set \eqref{SublevelSet_SemiDefProg} is bounded, if there exist $\alpha > 0$ and $\delta > 0$
such that the set $\{ x \in \mathbb{R}^d \mid f_0(x) < f^* + \alpha, \: \varphi(x) < \delta \}$ is bounded. 
\end{remark}

\bibliographystyle{abbrv}  
\bibliography{AugmLagrMult}

\end{document}